\documentclass[11pt,a4paper]{amsart}
\usepackage{amsfonts}
\usepackage{}
\usepackage{amsfonts}
\usepackage{amsmath,xypic}
\usepackage{mathrsfs}
\usepackage{amssymb}

\usepackage{CJK,CJKnumb}
\usepackage[CJKbookmarks,colorlinks,
            linkcolor=black,
            anchorcolor=black,
            citecolor=blue]{hyperref}
\usepackage{color}              
\usepackage{indentfirst}        
\usepackage{latexsym,bm}        
\usepackage{amsmath,amssymb}    
\usepackage{pstricks}
\usepackage{pst-node}
\usepackage{pst-tree}
\usepackage{pst-plot}
\usepackage{pst-text}
\usepackage{graphicx}
\usepackage{cases}
\usepackage{pifont}
\usepackage{txfonts}
\usepackage[all,knot,poly]{xy}


\setlength{\textwidth}{14cm} \setlength{\textheight}{20cm}
\setlength{\hoffset}{0cm} \setlength{\voffset}{0cm}
\setlength{\parindent}{2em}                 
\setlength{\parskip}{3pt plus1pt minus1pt}  

\setlength{\abovedisplayskip}{2pt plus1pt minus1pt}     
\setlength{\belowdisplayskip}{6pt plus1pt minus1pt}     
\setlength{\arraycolsep}{2pt}   

\allowdisplaybreaks[4]  

\CJKtilde   


\newcommand{\R}{\mathscr{R}}

\newtheorem{corollary}{Corollary}[section]
\newtheorem{theorem}{Theorem}[section]
\newtheorem{lemma}{Lemma}[section]
\newtheorem{remark}{Remark}[section]
\newtheorem{proposition}{Proposition}[section]
\newtheorem{definition}{Definition}[section]
\newtheorem{example}{Example}[section]

\allowdisplaybreaks[4]
                               { \end{spacing}
                               } 
\begin{document}
\title[Double-bosonization and Majid's conjecture, (I): Rank-inductions of $ABCD$]{Double-bosonization and Majid's conjecture, (I): Rank-inductions of $ABCD$}
\author[H. Hu]{Hongmei Hu$^{\dagger, 1}$}
\address{$^{\dagger}$School of Mathematical Sciences, University of Science and Technology of China, Jin Zhai Road 96,
Hefei 230026,
PR China}
\email{hmhu0124@126.com}

\author[N. Hu]{Naihong Hu$^{\ast,1}$}
\address{$^1$Department of Mathematics,  Shanghai Key Laboratory of Pure Mathematics and Mathematical Practice,
East China Normal University,
Minhang Campus,
Dong Chuan
Road 500,
Shanghai 200241,
PR China}
\email{nhhu@math.ecnu.edu.cn}

\thanks{N.~H., supported by the NNSFC (Grant No.
 11271131).}

\thanks{$\ast$ corresponding author.}

\date{}
\maketitle

\newcommand*{\abstractb}[3]{ %
                             \begingroup%
                             \leftskip=8mm \rightskip=8mm
                             \fontsize{11pt}{\baselineskip}\noindent{\textbf{Abstract} ~}{#1}\\ 
                             {\textbf{Keywords} ~}{#2}\\ 
                             {\textbf{MR(2010) Subject Classification} ~}{#3}\\ 
                                                          \endgroup
                           }
\begin{abstract}
Majid developed in \cite{majid3} the double-bosonization theory to
construct $U_q(\mathfrak g)$ and expected to generate inductively
not just a line but a tree of quantum groups starting from a node.
In this paper, the authors confirm the Majid's first expectation
(see p. 178 \cite{majid3}) through giving and verifying the full details of the
inductive constructions of $U_q(\mathfrak g)$ for the classical
types, i.e., the $ABCD$ series. Some examples in low ranks are
given to elucidate that any quantum group of classical type can be
constructed from the node corresponding to $U_{q}(\mathfrak{sl}_2)$.
\end{abstract}

\section{Introduction}

The invention of quantum groups is one of the outstanding
achievements of mathematical physics and mathematics in the late
twentieth century, which arose in the work of L. D. Faddeev with his
school for solving integrable models in $1{+}1$ dimension by the
quantum inverse scattering method. In the history of development, a
major event was the discovery of quantized universal enveloping
algebras $U_q(\mathfrak g)$ over the complex field by V. G. Drinfeld
\cite{dri} and M. Jimbo \cite{jimbo} independently around 1985. A
striking feature of quantum group theory is the close connections
with many branches of mathematics and physics, such as Lie groups,
Lie algebras and their representations, representation theory of
Hecke algebras, link invariant theory, conformal field theory, and
so on. This attracts many mathematicians to find some better way in
a suitable frame to understand the structure of quantum groups
defined initially by generators and relations. For instance, the
first we must mention is the famous FRT-construction \cite{FRT1} of
$U_q(\mathfrak g)$ for the classical types based on the $R$-matrices
of the vector representations associated to the classical simple Lie
algebras $\mathfrak g$, which is a natural analogue of the matrix
realization of the classical Lie algebras. It is a belief that the
commutation relations in quantum groups expressed by means of the
$R$-matrices are of fundamental importance for the theory.
Afterwards, Majid rediscovered the so-called ``Radford-Majid
bosonization" arising from a framework of a braided category over a Hopf
algebra (see \cite{rad}, \cite{rt}, \cite{majid1}, \cite{majid2},
\cite{majid9}), and then, based on the FRT-construction and
extending the Drinfeld double to the generalized quantum double
associated to a dual pair of Hopf algebras equipped with a (not
necessarily non-degenerate) pairing, Majid \cite{majid3} in 1996
developed the double-bosonization theory to give a direct
construction of $U_q(\mathfrak g)$ through viewing the Lusztig's
algebra $\mathfrak f$ (\cite{lus2}) as a braided group in a special braided
category  $\mathfrak{M}_H$ (or ${}_H\mathfrak{M}$), where
($H=kQ=k[K_i^{\pm1}]$, $A=kQ^\vee$) is a weakly quasitriangular dual
pair (see p. 169 of \cite{majid3}), where $Q$ (resp. $Q^\vee$) is a (dual) root lattice of
$\mathfrak g$. An analogous construction in spirit in the Yetter-Drinfeld category of a Hopf algebra was given
independently by Sommerh\"auser in \cite{somm}. 

Besides the constructions above, in the early 90s, Ringel \cite{ringel} realized the positive part of
quantum groups by quiver representations and Hall algebras,
which inspired Lusztig's canonical bases theory \cite{lus1,lus2}.
Rosso \cite{rosso} also realized the positive part of
$U_q(\mathfrak g)$ by introducing the quantum shuffle algebra in a
braided category, and gave a recipe of axiomatic inductive construction
based on the quantum ``shuffle" operation defined on the (braided)
tensor algebra of a braided vector space which is sometimes a bit
inconvenient for making practical calculations, in contrast with the
Majid's double-bosonization \cite{majid3}. Bridgeland \cite{Bri}
recently realized the entire quantum groups of type $ADE$ using the
Ringel-Hall algebras. From another way, Fang-Rosso \cite{fang}
realized the whole quantum groups by means of a new theory on the
quantum quasi-symmetric algebras due to Jian-Rosso \cite{jr}. More
general, the whole axiomatic description for the multi-parameter
quantum groups as well as the Hopf $2$-cocycle deformation under the
machinery of multi-parameter quasi-symmetric algebras have been
achieved in Hu-Li-Rosso \cite{hlr}.

In this paper, let us focus on the double-bosonizaiton theory in \cite{majid3}.

Associated to any mutually dual braided groups $B^{\star}, B$
covariant under a background quasitriangular Hopf algebra H, there
is a new quantum group on the tensor space $B^{\star}\otimes
H\otimes B$ by double-bosonization in \cite{majid3}, consisting of
$H$ extended by $B$ as additional `positive roots' and its dual
$B^{\star}$ as additional `negative roots'. The construction is more
powerful than the quantum double since one can reach directly to the
$U_q(\mathfrak g)$ (the quantum double is a bit big and one has to
make quotient). Specially, Majid viewed $U_{q}(\mathfrak n^{\pm})$ as the
mutually dual braided groups in braided category of right ``Cartan
subalgebra" $H$-modules, then recovered $U_q(\mathfrak g)$ by his
double-bosonization theory. Also, based on two
examples in low ranks given in \cite{majid3, majid8}, Majid claimed that many new
quantum groups, as well as the inductive (by rank) construction of
$U_q(\mathfrak g)$ can be obtained in principle by this theory \cite{majid3} (we call it the Majid's expectation),
since he imaged his double-bosonization framework allows to
generate a tree of quantum groups and at each node of the tree,
there are many choices to adjoin a pair of braided groups covariant under
the corresponding quantum group at that node. Actually, it is a combinatorial and representation-theoretical
challenge to elaborate the full tree structure, which brings us the first motivation. We want to describe in details
the inductive construction of quantum groups
$U_q(\mathfrak g)$ for all complex semisimple Lie algebras
$\mathfrak g$, which just elaborates some main branches of the tree. We will
limit ourself to consider the classical $ABCD$ series in this paper,
which is organized as follows.

In section 2, we recall some basic facts about the FRT-construction
and the Majid's double-bosonization construction. In section 3, we
analyse explicitly the procedure of the inductive construction of
$U_{q}({\mathfrak {sl}}_{n})$, and find some tips in low rank cases. That is,
although the entire $FRT$-matrix $m^{\pm}$ is hard to know from the
vector representation, we only need to know those diagonal and minor
diagonal elements. This inspires us how to proceed the general
rank-inductive construction from $U_{q}({\mathfrak {sl}}_{n})$ to $U_{q}({\mathfrak {sl}}_{n+1})$. At
the node diagram of $U_q({\mathfrak {sl}}_n)$, we can choose a pair of braided groups
corresponding to the standard $R$-matrix arising from the vector representation. Similarly, we can do the same thing for
the $BCD$ series. These demonstrate that the Majid's
double-bosonization framework does allow to generate four
branching-lines of nodes diagram of quantum groups. Furthermore, in
the last section, we will give some examples to show how to grow the
tree (with $ABCD$-branches) of nodes diagram of quantum groups out
of the same `root' node at type $A$ via type-crossing construction starting
from type $A_r$ for $r=1,2,3$.

\section{FRT-construction and Majid's double-bosonization construction}
In this paper,
let $k$ be the complex field,
 $\mathfrak g$ a finite-dimensional complex simple Lie algebra with simple roots $\alpha_{i}$.
Let $\lambda_{i}$ be the fundamental weight corresponding to
$\alpha_i$. Cartan matrix of $\mathfrak g$ is $(a_{ij})$, where
$a_{ij}=\frac{2(\alpha_{i},\alpha_{j})}{(\alpha_{i},\alpha_{i})}$,
and $d_{i}=\frac{(\alpha_{i},\alpha_{i})}{2}$. Let $(H,\R)$ be a
quasitriangular Hopf algebra, where $\R$ is the universal
$R$-matrix, $\R=\R^{(1)}\otimes \R^{(2)}$, $\R_{21}=\R^{(2)}\otimes
\R^{(1)}$. Denote by $\Delta, \,\eta, \,\epsilon$, $S$ its
coproduct, counit, unit, its antipode, respectively. We shall use
Sweedler's notation: for $h \in H$, $\Delta(h)=h_{1}\otimes h_{2}$.
Write $H^{op} \ (H^{cop})$ the opposite (co)algebra structure of
$H$, respectively. Let $\mathfrak{M}_{H} \ ({}_{H}\mathfrak{M}$) be
the braided category consisting of right (left) $H$-modules,
respectively. If there exists a coquasitriangular Hopf algebra $A$
such that $(H, A)$ is a weakly quasitriangular dual pair, then
$\mathfrak{M}_{H} \ ({}_{H}\mathfrak{M}$) is equivalent to the
braided category ${}^{A}\mathfrak{M}\ (\mathfrak{M}^{A})$ consisting
of left (right) $A$-comodules, respectively. For the detailed
description of these theories, we left to the readers to refer to
Drinfeld's and Majid's papers \cite{dri}, \cite{majid4},
\cite{majid5}, and so on. By a braided group, following
\cite{majid4}, we mean a braided bialgebra or Hopf algebra in some
braided category. In order to distinguish from the ordinary Hopf
algebras, denote by $\underline{\Delta},\underline{S}$ its coproduct
and antipode, respectively. An invertible matrix solution of the
quantum Yang-Baxter equation (QYBE)
$R_{12}R_{13}R_{23}=R_{23}R_{13}R_{12}$ is called a $R$-matrix. For
later use, one needs a certain extension $U_q^{ext}(\mathfrak g)$ of
$U_q(\mathfrak g)$. This is constructed by adjoining formally
certain products of the elements $K_{i}^{\pm\frac{1}{n}}$,
$i=1,\cdots,n-1$ to $U_{q}({\mathfrak {sl}}_{n})$, the elements
$K_{n}^{\pm\frac{1}{2}}$ to $U_{q}({\mathfrak {sp}}_{2n})$, and
$K_{n-1}^{\pm\frac{1}{2}}K_{n}^{\pm\frac{1}{2}}$ and
$K_{n-1}^{\pm\frac{1}{2}}K_{n}^{\mp\frac{1}{2}}$ to
$U_{q}({\mathfrak {so}}_{2n})$. The precise definitions of them can be found in
\cite{klim}. Denote by $T_{V}$ an irreducible representation of
$U_{q}(\mathfrak{g})$, where $V$ is the corresponding module with a
basis $\{\,x_i\,\}$.

\subsection{$FRT$-construction}
One can obtain an invertible (basic) $R$-matrix from the quasitriangular Hopf algebra $U_{q}(\mathfrak{g})$ and its (vector) representation.
Conversely,
starting from an invertible (basic) $R$-matrix, if does there exist a quasitriangular Hopf algebra to recover such a $R$-matrix through a suitable representation? First of all, by Faddeev-Reshetikhin-Takhtajan \cite{FRT1},
the following fact is basic and well-known.
\begin{definition}
Given an invertible matrix solution $R$ of the QYBE,
there is a bialgebra $A(R)$,
named the $FRT$-bialgebra,
which is generated by $1$ and $t^{i}_{j}$, for $1\le i, j\le n$,
with the relations
$RT_{1}T_{2}=T_{2}T_{1}R, \
\Delta(T)=T\otimes T$
and
$\epsilon(T)=I$ using standard notation in \cite{FRT1},
where the matrix $T=(t^{i}_{j})$,
$T_{1}=T\otimes I$,
$T_{2}=I\otimes T$.
\end{definition}

Observe that $A(R)$ is a coquasitriangular bialgebra with
$\R:\, A(R)\otimes A(R) \longrightarrow k$ such that $\R(t^{i}_{j}\otimes t^{k}_{l})=R^{ik}_{jl}$.
Here $R^{ik}_{jl}$ denotes the entry at row $(ik)$ and column $(jl)$ in matrix $R$.
Secondly, in the dual space $A(R)^{\ast}=Hom(A(R),k)$,
$\Delta$ of $A(R)$ induces the multiplication of $A(R)^{\ast}$. In \cite{FRT1},
$U_{R}$ is defined to be the subalgebra of $A(R)^{\ast}$ generated by $L^{\pm}=(l^{\pm}_{ij})$,
with relations
$$(PRP)L_{1}^{\pm}L_{2}^{\pm}=L_{2}^{\pm}L_{1}^{\pm}(PRP), \qquad
(PRP)L_{1}^{+}L_{2}^{-}=L_{2}^{-}L_{1}^{+}(PRP),
$$
where $l^{\pm}_{ij}$ is defined by
$
(l^{+}_{ij},t^{k}_{l})=R^{ki}_{lj},
$
$
(l^{-}_{ij},t^{k}_{l})=(R^{-1})^{ik}_{jl}.
$

The algebra $U_{R}$ is a bialgebra with coproduct
$\Delta(L^{\pm})=L^{\pm}\otimes L^{\pm}$ and counit
$\varepsilon(l_{ij}^{\pm})=\delta_{ij}$. Specially, when $R$ is the
classical $R$-matrix, bialgebra $A(R)$ has a quotient
coquasitriangular Hopf algebra, denoted by $Fun(G_{q})$ or
$\mathcal{O}_{q}(G)$, and $U_{R}$ also has a corresponding quotient
quasitriangular Hopf algebra, which is isomorphic to the extended
quantized enveloping algebra $U_{q}^{ext}(\mathfrak{g})$. Moreover,
there exists a (non-degenerate) dual pairing $\langle ,\, \rangle$
between $\mathcal{O}_{q}(G)$ and $U_{q}^{ext}(\mathfrak{g})$. The
way of getting the resulting quasitriangular algebras
$U_{q}^{ext}(\mathfrak{g})$ is the so-called FRT-construction of the
quantized enveloping algebras (for the classical types).

Motivated by the work of \cite{FRT1},
Majid built the theory of the weakly quasitriangular dual pairings associated with $R$-matrices \cite{majid3} in a more general context.
\begin{remark}\label{note}
The $R$-matrices used in Majid's papers \cite{majid1, majid3} are a bit different from the standard ones as in \cite{FRT1}, which are the conjugations $P\circ\cdot\circ P$ of the ordinary $R$-matrices by the permutation matrix $P: P(u\otimes v)=v\otimes u$.
It can be checked directly that $A(P\circ R\circ P)=A(R)^{\text{op}}$, where $(P\circ R\circ P)^{ij}_{kl}=R^{ji}_{lk}$.
Note that we will use Majid's notation for $R$-matrices as in \cite{majid3} in the remaining sections of this paper.
\end{remark}

\subsection{Majid's double-bosonization}
Majid \cite{majid3} proposed the concept of a weakly quasitriangular dual pair via his insight on more examples on matched pairs of bialgebras or Hopf algebras in \cite{majid7}.
This allowed him to establish a theory of double-bosonization in a broad framework that generalized the FRT's construction which was limited to the classical types.
\begin{definition}
Let $(H,A)$ be a pair of Hopf algebras equipped with a dual pairing $\langle ,\,\rangle$
and convolution-invertible algebra\,/\,anti-coalgebra maps $\R,\bar{\R}: A \rightarrow H$ obeying
$$\langle\bar{\R}(a),b\rangle=\langle\R^{-1}(b),a\rangle,\quad
\partial^{R}h=\R \ast(\partial^{L}h)\ast\R^{-1},\quad
\partial^{R}h=\bar{\R}\ast(\partial^{L}h)\ast\bar{\R}^{-1}
$$
for
$a,b\in A, \ h\in H$.
Here $\ast$ is the convolution product in $hom(A,H)$ and
$(\partial^{L}h)(a)=\langle h_{(1)},a\rangle h_{(2)},
$
$
(\partial^{R}h)(a)=h_{(1)}\langle h_{(2)},a\rangle$
are left, right ``differentiation operators" regarded as maps $A \rightarrow H$ for fixed $h$.
\end{definition}
Let $C, B$ be a pair of braided groups in $\mathfrak{M}_{H}$, which are called
dually paired if there is an intertwiner
$ev: C\otimes B \longrightarrow k$ such that
$
\text{ev}(cd,b)=\text{ev}(d,b_{\underline{(1)}})ev(c,b_{\underline{(2)}}),$
$
\text{ev}(c,ab)=\text{ev}(c_{\underline{(2)}},a)ev(c_{\underline{(1)}},b),$
$
\forall a,b\in B,c,d\in C.
$
Then $C^{op/cop}$ (with opposite product and coproduct) is a Hopf algebra in $_{H}\mathfrak{M}$,
which is dual to $B$ in the sense of an ordinary dual pairing $\langle~,~\rangle$ with
$H$-bicovariant: $\langle h\rhd c,b\rangle=\langle c, b\lhd h\rangle$ for all $h\in H$.
Let $\overline{C}=(C^{op/cop})^{\underline{cop}}$,
then $\overline{C}$ is a braided group in $_{\overline{H}}\mathfrak{M}$,
where $\overline{H}$ is $(H,\R_{21}^{-1})$.
With these,
Majid gave the following double-bosonization theorem.
\begin{theorem}\label{ml1} $(${\rm\bf Majid}$)$
On the tensor space $\bar{C}\otimes H \otimes B$,
there is a unique Hopf algebra structure $U=U(\bar{C},H,B)$ such that
$H\ltimes B$ (bosonization) and $\bar{C}\rtimes H$ (bosonization) are sub-Hopf algebras by the canonical inclusions,
with cross relation
$$
bh=h_{(1)}(b\lhd h_{(2)}),\quad
ch=h_{(2)}(c\lhd h_{(1)}),
\eqno{(C1)}$$
here
$b\in B,
c\in C,
h\in H$.
If there exists a coquasitriangular Hopf algebra $A$ such that
$(H,A)$ is a weakly quasitriangular dual pair, and $b, c$ are
primitive elements, then some relations simplify to
$$
[b,c]=\R(b^{\overline{(1)}})\langle c,b^{\overline{(2)}}\rangle-
\langle c^{\overline{(1)}},b\rangle\bar{\R}(c^{\overline{(2)}});\eqno{(C2)}
$$
$$
\Delta b=b^{\overline{(2)}}\otimes \R(b^{\overline{(1)}})+1\otimes b,\quad
\Delta c=c\otimes 1+\bar{\R}(c^{\overline{(2)}})
\otimes c^{\overline{(1)}}.\eqno{(C3)}
$$
\end{theorem}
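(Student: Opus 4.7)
The plan is to construct $U$ as a generalized quantum double of the two ordinary Hopf algebras $H\ltimes B$ and $\bar{C}\rtimes H$, followed by identification of the two canonical copies of $H$ sitting inside. Since $B$ is a braided group in $\mathfrak{M}_{H}$ and $\bar{C}=(C^{op/cop})^{\underline{cop}}$ is a braided group in ${}_{\bar{H}}\mathfrak{M}$, the Radford--Majid bosonization theorem already produces genuine Hopf algebras $H\ltimes B$ and $\bar{C}\rtimes H$, each containing $H$ as a sub-Hopf algebra; the crossed-product relations $bh=h_{(1)}(b\lhd h_{(2)})$ and its opposite-braided analogue for $c$ give the cross relation $(C1)$ directly.

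First, using the evaluation $\text{ev}:C\otimes B\to k$ between the dually paired braided groups together with the pairing $\langle\,,\,\rangle$ of $A$ with $H$ transported by the weakly quasitriangular maps $\R,\bar{\R}:A\to H$, I would assemble a bialgebra pairing $\tau:(\bar{C}\rtimes H)\otimes(H\ltimes B)\to k$ extending both evaluations compatibly. The key check is that $\tau$ respects the bosonized products and coproducts, which uses $H$-bicovariance of $\text{ev}$ together with the axioms $\partial^{R}h=\R\ast(\partial^{L}h)\ast\R^{-1}=\bar{\R}\ast(\partial^{L}h)\ast\bar{\R}^{-1}$ of the weakly quasitriangular pair---these are precisely the compatibility conditions gluing the two $H$-parts.

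Next I would invoke Majid's general construction of a (generalized) quantum double from a bialgebra pairing of Hopf algebras: the vector space $(\bar{C}\rtimes H)\otimes(H\ltimes B)$ acquires a unique Hopf algebra structure in which both factors embed as sub-Hopf algebras and the cross product between them is dictated by $\tau$. Passing to the quotient that identifies the two canonical copies of $H$ (legitimate because the restriction of $\tau$ to these copies matches diagonally) yields an algebra supported on $\bar{C}\otimes H\otimes B$; this is the desired $U$. Uniqueness is automatic, since once the two bosonizations sit as sub-Hopf algebras and the $b$-$c$ crossings are fixed by $\tau$, multiplicativity together with the PBW-style decomposition $\bar{C}\otimes H\otimes B$ forces every remaining piece of structure, including the antipode.

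Finally, to derive $(C2)$ and $(C3)$ I specialize under the additional hypotheses. Braided primitivity $\underline{\Delta}b=b\otimes1+1\otimes b$ with coaction $b\mapsto b^{\overline{(1)}}\otimes b^{\overline{(2)}}$ reduces the Radford--Majid coproduct formula to the first half of $(C3)$, and the dual argument, using that $\bar{C}$ is built from $C^{op/cop}$ with coaction transported via $\bar{\R}$, gives the second half. Substituting these simplified coproducts into the cross product formula produced by the quantum double and using primitivity to kill the non-linear tails leaves exactly $(C2)$. The main obstacle I anticipate is assembling $\tau$: the $H$-pairing and the braided evaluation are not independent but are glued together via $\R,\bar{\R}$, and one must show that the four axioms of a weakly quasitriangular dual pair are precisely what is needed for $\tau$ to be compatible with the cross relations and coproducts of both bosonizations simultaneously; once this is established, the quantum double step is a formal application and the specialization is direct substitution.
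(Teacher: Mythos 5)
This is a theorem the paper imports from Majid \cite{majid3} without proof, so there is no in-paper argument to compare against; I can only assess your sketch on its own terms. Your overall strategy --- realize $U$ as a quotient of a generalized quantum double of the two bosonizations $\bar{C}\rtimes H$ and $H\ltimes B$, identifying the two canonical copies of $H$ --- is indeed the route Majid himself takes, and the derivation of $(C1)$ from the bosonization cross-relations and of $(C2)$, $(C3)$ by specializing to primitive $b,c$ is correctly placed. But as a proof it has two genuine gaps. First, the existence and uniqueness statement in the first half of the theorem is asserted for an arbitrary quasitriangular Hopf algebra $(H,\R)$ with dually paired braided groups $B,C$; the coquasitriangular $A$ and the maps $\R,\bar{\R}\colon A\to H$ are only hypothesized in the second half. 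Your construction of the pairing $\tau$ leans on the weakly quasitriangular dual pair axioms, so as written it only produces $U$ in the specialized setting. In the general case $\tau$ must be assembled from the braided evaluation $\mathrm{ev}\colon C\otimes B\to k$ together with the universal $R$-matrix of $H$ itself (acting through the module structures), and the resulting $b$--$c$ cross relation is then expressed via $\R^{(1)},\R^{(2)}$ rather than via $\R(\,\cdot\,),\bar{\R}(\,\cdot\,)$.

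Second, and more importantly, the step you flag as ``the main obstacle I anticipate'' --- verifying that $\tau$ is a Hopf algebra (skew-)pairing compatible with both bosonized products and coproducts, and that the identification of the two copies of $H$ is by a Hopf ideal so that the quotient is supported on $\bar{C}\otimes H\otimes B$ --- is not an obstacle to be anticipated; it is the entire content of the theorem. Everything else (forming the double once a pairing exists, uniqueness from the triangular decomposition, substitution of primitivity into the coproduct formulas) is formal. Until that verification is carried out, including the check that $H$-bicovariance of $\mathrm{ev}$ and the identities $\partial^{R}h=\R\ast(\partial^{L}h)\ast\R^{-1}=\bar{\R}\ast(\partial^{L}h)\ast\bar{\R}^{-1}$ are exactly what make the two gluings consistent, the argument is an accurate outline of Majid's proof rather than a proof.
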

Let $\widetilde{U(R)}$ be the double cross product bialgebra of
$A(R)^{\text{op}}$ in \cite{majid7} generated by $m^{\pm}$ with the
 bialgebra structure given by
$$
Rm^{\pm}_{1}m^{\pm}_{2}=m^{\pm}_{2}m^{\pm}_{1}R,\quad
Rm_{1}^{+}m_{2}^{-}=m_{2}^{-}m_{1}^{+}R,\quad
\Delta((m^{\pm})^{i}_{j})=(m^{\pm})_{j}^{a}\otimes
(m^{\pm})_{a}^{i},\quad
\epsilon((m^{\pm})^{i}_{j})=\delta_{ij},\eqno{(C4)}
$$
where $(m^{\pm})^{i}_{j}$ are the $FRT$-generators, $m^{\pm}$ are
called $FRT$-matrices.
\begin{proposition}\label{weakly}
$(1)$
$(\widetilde{U(R)},A(R))$ is a weakly quasitriangular dual pair with
$$\langle(m^{+})^{i}_{j},t^{k}_{l}\rangle= R^{ik}_{jl},
\quad
\langle (m^{-})^{i}_{j},t^{k}_{l}\rangle =(R^{-1})^{ki}_{lj},
\quad
\R(T)=m^{+},\quad
\bar{\R}(T)=m^{-}.$$
Specially,
when $R$ is the classical $R$-matrix,
then the weakly quasitriangular dual pair can be descended to a mutually dual pair of quotient Hopf algebras $(U_{q}^{ext}(\mathfrak g),\mathcal{O}_q(G))$  (where $G$ is one of connected and simply connected algebraic groups of classical types) with the correct modification \cite{majid3}:
$$
\langle (m^{+})^{i}_{j},t^{k}_{l}\rangle=\lambda R^{ik}_{jl}, \quad
\langle (m^{-})^{i}_{j},t^{k}_{l}\rangle=\lambda^{-1} (R^{-1}){}^{ki}_{lj}.
$$
Such $\lambda$ is called a {\it quantum group normalization constant}.

$(2)$ Suppose that $R'$ is another matrix such that (\text{i}) \
$R_{12}R_{13}R'_{23}=R'_{23}R_{13}R_{12}$, \ $R_{23}R_{13}R'_{12}$
$=R'_{12}R_{13}R_{23}$, \ (\text{ii}) \ $(PR+1)(PR'-1)=0$, \
(\text{iii}) $R_{21}R'_{12}=R'_{21}R_{12}$, where $P$ is the
permutation matrix with entries
$P^{ij}_{kl}=\delta_{il}\delta_{jk}$. Then a braided-vector algebra
$V(R', R)$ generated by generators $1$, $\{e^{i}~|~i=1,\cdots,n\}$,
 and relations
$e^{i}e^{j}=\sum\limits_{a,b}{R'}^{ji}_{ab}e^{a}e^{b}$
forms a braided group with
$\underline{\Delta}(e^{i})=e^{i}\otimes 1+1\otimes e^{i},
\underline{\epsilon}(e^{i})=0,
\underline{S}(e^{i})=-e^{i},
\Psi(e^{i}\otimes e^{j})=\sum\limits_{a,b}R^{ji}_{ab}e^{a}\otimes e^{b}$
in braided category ${}^{A(R)}\mathfrak{M}$.
Under duality $\langle f_{j}, e^{i}\rangle=\delta_{ij}$, a
braided-covector algebra $V^{\vee}(R', R_{21}^{-1})$ generated by 1 and $\{f_{j}~|~j=1,\cdots,n\}$,
and relations
$f_{i}f_{j}=\sum\limits_{a,b}f_{b}f_{a}{R'}_{ab}^{ij}$
forms another braided group with
$\underline{\Delta}(f_{i})=f_{i}\otimes 1+1\otimes f_{i}$,
$\underline{\epsilon}(f_{i})=0$,
$\underline{S}(f_{i})=-f_{i}$,
$\Psi(f_{i}\otimes f_{j})=\sum\limits_{a,b}f_{b}\otimes f_{a}R^{a}_{i}{}^{b}_{j}$
in braided category $\mathfrak{M}^{A(R)}$.
\end{proposition}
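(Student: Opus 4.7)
For part (1), the plan is first to verify that $\R, \bar{\R}: A(R) \to \widetilde{U(R)}$, defined on generators by $\R(T) = m^+$ and $\bar{\R}(T) = m^-$, extend to algebra/anti-coalgebra maps, and that $\langle\cdot,\cdot\rangle$ is a well-defined bialgebra pairing. Well-definedness requires checking that the relation $RT_1T_2 = T_2T_1R$ in $A(R)$ is annihilated when paired against both $m^+$ and $m^-$, and that the relations (C4) in $\widetilde{U(R)}$ are annihilated against $T$; each of these reduces to the QYBE for $R$ (or for $R^{-1}$). With this in place, the three axioms of a weakly quasitriangular dual pair become explicit matrix identities in the entries of $R$ and $R^{-1}$: the duality axiom $\langle \bar{\R}(a), b\rangle = \langle \R^{-1}(b), a\rangle$ follows by comparing the defining pairing formulas, and the two conjugation identities $\partial^R h = \R * (\partial^L h) * \R^{-1}$ and $\partial^R h = \bar{\R} * (\partial^L h) * \bar{\R}^{-1}$ translate on the generators $(m^\pm)^i_j$ precisely into the cross relation $R m^+_1 m^-_2 = m^-_2 m^+_1 R$ (and its $\bar{\R}$-variant). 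For the descent to the classical case, one exhibits the normalization constant $\lambda$ so that the rescaled pairing $\lambda R^{ik}_{jl}$, $\lambda^{-1}(R^{-1})^{ki}_{lj}$ annihilates the extra quotient relations cutting $A(R) \twoheadrightarrow \mathcal{O}_q(G)$ and $\widetilde{U(R)} \twoheadrightarrow U_q^{ext}(\mathfrak{g})$ (for instance $\det_q(T) = 1$ in type $A_n$), which is handled type by type.

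For part (2), the plan is to verify the braided Hopf algebra axioms for $V(R', R)$ inside ${}^{A(R)}\mathfrak{M}$. The left $A(R)$-coaction $e^i \mapsto t^i_a \otimes e^a$, combined with condition (i), endows $V(R',R)$ with the braiding $\Psi(e^i \otimes e^j) = \sum R^{ji}_{ab}\, e^a \otimes e^b$ and makes the ideal generated by $e^i e^j - \sum {R'}^{ji}_{ab}\, e^a e^b$ an $A(R)$-subcomodule. The crux is to show that $\underline{\Delta}(e^i) = e^i \otimes 1 + 1 \otimes e^i$ is multiplicative with respect to the braided tensor product: expanding $\underline{\Delta}(e^i)\underline{\Delta}(e^j)$ and applying $\Psi$ to the middle swap yields cross terms whose compatibility with the quadratic ideal is exactly the Hecke-type identity $(PR + 1)(PR' - 1) = 0$ of condition (ii). Coassociativity, counit, and antipode are then automatic since the generators are primitive and $\underline{S}$ extends antimultiplicatively in the braided sense. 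The analogous statement for $V^\vee(R', R_{21}^{-1})$ follows by an entirely symmetric argument in $\mathfrak{M}^{A(R)}$; condition (iii) $R_{21} R'_{12} = R'_{21} R_{12}$ is what ensures that the duality pairing $\langle f_j, e^i\rangle = \delta_{ij}$ intertwines the two sets of quadratic relations and is compatible with the opposite coaction.

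The main obstacle, and the only step beyond routine index juggling, is the algebra-homomorphism check for $\underline{\Delta}$ in part (2): one has to carefully reorder the four monomials appearing in $\underline{\Delta}(e^i)\underline{\Delta}(e^j)$ via the braiding $\Psi$ and recognise the resulting degree-two identity as (ii) itself. All other steps follow from the QYBE, the explicit pairing formulas, and direct substitution; they are technical but mechanical, and Remark \ref{note} reminds us that signs and index orderings must match Majid's convention rather than the standard FRT one.
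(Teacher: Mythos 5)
The paper does not actually prove this proposition: it is stated as a recollection of Majid's results, with the weak quasitriangularity of $(\widetilde{U(R)},A(R))$ and the braided (co)vector algebras $V(R',R)$, $V^{\vee}(R',R_{21}^{-1})$ imported wholesale from \cite{majid3} (and the normalization constant $\lambda$ from \cite{FRT1}/\cite{majid3}), so there is no in-paper argument to compare yours against. Judged on its own terms, your outline is a faithful reconstruction of Majid's proofs and the logical dependencies are placed correctly: condition (i) is what makes the quadratic ideal a subcomodule so that the braiding $\Psi$ descends; condition (ii), the Hecke-type identity $(PR+1)(PR'-1)=0$, is exactly the identity you get when you demand that the primitive coproduct be multiplicative for the braided tensor product (the cross terms $e^i\otimes e^j+\Psi(e^i\otimes e^j)$ must be stable under imposing the $R'$-relations in either tensor factor); and condition (iii) governs the duality between $V$ and $V^{\vee}$. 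Two small points you gloss over: the conjugation identities $\partial^{R}h=\R\ast(\partial^{L}h)\ast\R^{-1}=\bar{\R}\ast(\partial^{L}h)\ast\bar{\R}^{-1}$, checked on the generators $(m^{\pm})^{i}_{j}$ with the coproduct $\Delta((m^{\pm})^{i}_{j})=(m^{\pm})^{a}_{j}\otimes(m^{\pm})^{i}_{a}$, encode not only the cross relation $Rm^{+}_{1}m^{-}_{2}=m^{-}_{2}m^{+}_{1}R$ but also the relations $Rm^{\pm}_{1}m^{\pm}_{2}=m^{\pm}_{2}m^{\pm}_{1}R$, all of which must be verified; and well-definedness of $\underline{S}$ on the quadratic ideal is not entirely automatic from primitivity but needs the braided antimultiplicativity check against the $R'$-relations. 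Neither affects the soundness of the plan. If you intend this as a self-contained proof rather than a pointer to \cite{majid3}, the degree-two computation for (ii) and the verification of the three weak-quasitriangularity axioms on generators are the parts that must be written out explicitly.
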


\begin{remark}\label{op}
In fact, by Remark \ref{note}, $\widetilde{U(R)}$ determined
uniquely by relations $(C4)$ with the $R$-matrix in \cite{majid3} is
the opposite of $U_{R}$ constructed by $FRT$-approach. So the
quantized enveloping algebra $U_{q}(\mathfrak{g})$ recovered by
Majid (Proposition 4.3 in \cite{majid3}) satisfies $
E_{i}K_{j}=q_{i}^{a_{ij}}K_{j}E_{i}, $ $
F_{i}K_{j}=q_{i}^{-a_{ij}}K_{j}F_{i}, $ $
[E_{i},F_{j}]=\delta_{ij}\frac{K_{i}-K_{i}^{-1}}{q-q^{-1}}, $ where
$q_{i}=q^{d_{i}}$, which is the opposite of the ordinary
$U_{q}(\mathfrak{g})$. By abuse of notation, we still use $U_{q}(\mathfrak{g})$
instead of $U_{q}(\mathfrak{g})^{op}$ in the
remaining sections of this paper.
\end{remark}

With the new $\lambda R$-matrix in {\bf Proposition \ref{weakly}},
in order that $V(R^{\prime},R),V^{\vee}(R^{\prime},R_{21}^{-1})$ are still braided groups ,
then $(U_{q}^{ext}(\mathfrak g),\mathcal{O}_q(G))$ must be centrally extended to the pair
$$\Bigl(\widetilde{U_{q}^{ext}(\mathfrak g)}=U_{q}(\mathfrak g)\otimes k[c,c^{-1}],\widetilde{\mathcal{O}_q(G)}=\mathcal{O}_q(G)\otimes k[g,g^{-1}]\Bigr)$$
with action $e^{i}\lhd c=\lambda e^{i}$,
$f_{i}\lhd c=\lambda f_{i}$, $\langle c,g\rangle=\lambda$.
By Theorem \ref{ml1},
we have the following
\begin{corollary}\label{cor1}
$U=U(V^{\vee}(R^{\prime},R_{21}^{-1}),\widetilde{U_{q}^{ext}(\mathfrak g)},V(R^{\prime},R))$ is a new quantum group with the cross relations:
$e^{i}(m^{+})^{j}_{k}=\lambda R^{ji}_{ab}(m^{+})^{a}_{k}e^{b}$,
$(m^{-})^{i}_{j}e^{k}=\lambda R^{ki}_{ab}e^{a}(m^{-})^{b}_{j}$,
$(m^{+})^{i}_{j}f_{k}=\lambda f_{b}(m^{+})^{i}_{a}R^{ab}_{jk}$,
$f_{i}(m^{-})^{j}_{k}=\lambda (m^{-})^{j}_{b}f_{a}R^{ab}_{ik}$,
$cf_{i}=\lambda f_{i}c$,
$e^{i}c=\lambda ce^{i}$,
$[c,m^{\pm}]=0,$
$[e^{i},f_{j}]=\delta_{ij}\frac{(m^{+})^{i}_{j}c^{-1}-c(m^{-})^{i}_{j}}{q_{\ast}-q_{\ast}^{-1}};$
and the coproduct:
$\Delta c=c\otimes c,$
$\Delta e^{i}=e^{a}\otimes (m^{+})^{i}_{a}c^{-1}+1\otimes e^{i},$
$\Delta f_{i}=f_{i}\otimes 1+c(m^{-})^{a}_{i}\otimes f_{a},$
$\epsilon e^{i}=\epsilon f_{i}=0$,
where one can normalize $e^{i}$ such that the factor $q_{\ast}-q_{\ast}^{-1}$ satisfies the situation you need.
\end{corollary}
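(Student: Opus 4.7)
The plan is to specialize Majid's double-bosonization theorem (Theorem~\ref{ml1}) to the triple $H=\widetilde{U_q^{ext}(\mathfrak g)}$, $B=V(R',R)$, $C=V^\vee(R',R_{21}^{-1})$, using the centrally extended weakly quasitriangular dual pair $(\widetilde{U_q^{ext}(\mathfrak g)},\widetilde{\mathcal{O}_q(G)})$ of Proposition~\ref{weakly}. The central generators $c,g$ are introduced precisely so that the scalar $\lambda$ occurring in $\langle(m^+)^i_j,t^k_l\rangle=\lambda R^{ik}_{jl}$ can be absorbed, keeping $V(R',R)$ and $V^\vee(R',R_{21}^{-1})$ braided groups relative to the modified $\lambda R$-matrix. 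Once this setup is fixed, the corollary is an unpacking exercise: every relation and every coproduct claimed is a direct instance of (C1)--(C3).

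The main bookkeeping proceeds as follows. First, I would substitute the matrix coproducts $\Delta((m^\pm)^j_k)=(m^\pm)^a_k\otimes(m^\pm)^j_a$ into the cross relations (C1) $bh=h_{(1)}(b\lhd h_{(2)})$ and $ch=h_{(2)}(c\lhd h_{(1)})$. The right $H$-action of $(m^\pm)^j_a$ on $e^i$, resp.\ on $f_i$, is computed by transporting the $A(R)$-(co)action on $V(R',R)$, resp.\ on $V^\vee(R',R_{21}^{-1})$, through the pairings of Proposition~\ref{weakly}, producing exactly the scalars $\lambda R^{ji}_{ab}$ and the other three patterns that appear in the statement. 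The centrality of $c$ and the relations $e^ic=\lambda ce^i$, $cf_i=\lambda f_ic$ follow at once from the definitions $e^i\lhd c=\lambda e^i$, $f_i\lhd c=\lambda f_i$.

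Next, I would compute $[e^i,f_j]$ by feeding primitivity into (C2). Since $\underline{\Delta}e^i=e^i\otimes1+1\otimes e^i$ and similarly for $f_j$, only the $A$-coaction terms $e^i\mapsto t^i_a\otimes e^a$ and $f_j\mapsto f_a\otimes t^a_j$ survive in $b^{\overline{(1)}}\otimes b^{\overline{(2)}}$ and $c^{\overline{(1)}}\otimes c^{\overline{(2)}}$; combining this with $\mathcal R(T)=m^+$, $\bar{\mathcal R}(T)=m^-$, and the duality $\langle f_j,e^i\rangle=\delta_{ij}$ converts (C2) into $\delta_{ij}\frac{(m^+)^i_jc^{-1}-c(m^-)^i_j}{q_*-q_*^{-1}}$, with the $c^{\pm1}$'s recording the $\lambda^{\pm1}$ carried by $\mathcal R,\bar{\mathcal R}$, and the denominator fixed by rescaling $e^i$. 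The coproduct formulas $\Delta e^i=e^a\otimes(m^+)^i_ac^{-1}+1\otimes e^i$ and $\Delta f_i=f_i\otimes1+c(m^-)^a_i\otimes f_a$ come out of (C3) in exactly the same way.

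The main obstacle is not any single step but the global sign/opposite-algebra bookkeeping: one must consistently track the Majid vs.\ FRT convention for $R$-matrices flagged in Remark~\ref{note} and the opposite-algebra convention of Remark~\ref{op}, and decide once and for all which of left/right (co)actions is used on $V$ and on $V^\vee$ so that the index placements in the cross relations match the displayed forms. Once those conventions are pinned down, no further input beyond Theorem~\ref{ml1} and Proposition~\ref{weakly} is required.
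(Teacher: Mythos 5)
Your proposal is correct and follows exactly the route the paper takes: the paper offers no written proof beyond the phrase ``By Theorem \ref{ml1}, we have the following,'' i.e., it treats the corollary as a direct specialization of Majid's double-bosonization theorem to the centrally extended pair $(\widetilde{U_{q}^{ext}(\mathfrak g)},\widetilde{\mathcal{O}_q(G)})$ with the braided (co)vector algebras of Proposition \ref{weakly}, which is precisely your plan. Your unpacking of (C1)--(C3) and the convention bookkeeping is simply a more explicit version of what the paper leaves implicit.
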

So the Majid's double-bosonization construction can lead to new quantum
groups. After giving an example of obtaining $U_{q}({\mathfrak {sl}}_{3})$ from $
U_{q}^{ext}({\mathfrak {sl}}_{2})$ in \cite{majid3}, Majid expected that the novel resulting quantum group
is the quantum group of higher-one rank in the classical $ABCD$'s
series. In the current paper, we will solve such Majid's expectation in the classical types that has been
an open question aimed by Majid (\cite{majid3}) since the mid of 90's. In the next section, we will
give full details of the rank-inductive construction in the $ABCD$
series.

\section{Rank-inductive construction of quantum groups for classical types}
In order to explore the structure of the resulting quantum group in {\bf Corollary \ref{cor1}},
we need to know how to get the explicit form of the $FRT$-matrix $m^{\pm}$,
which can be obtained by the following lemma.
\begin{lemma}\label{lem1}
Corresponding to the invertible matrix $R$ obeying the QYBE, we have
$S(l_{ij}^{\pm})=(m^{\pm})^{i}_{j}$ in the quotient Hopf algebra,
$S$ is the corresponding antipode.
\end{lemma}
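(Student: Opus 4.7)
The plan is to identify both $l^{\pm}_{ij}$ and $(m^{\pm})^i_j$ inside the common quotient Hopf algebra $U_q^{\mathrm{ext}}(\mathfrak g)$ by exploiting the non-degenerate dual pairing $\langle\cdot,\cdot\rangle$ between it and $\mathcal O_q(G)$ recorded in the FRT subsection. Both families of generators are uniquely pinned down by their values on the coordinate functions $t^k_l$ (together with their group-like matrix coproducts $\Delta(L^{\pm})=L^{\pm}\otimes L^{\pm}$, $\Delta(m^{\pm})=m^{\pm}\otimes m^{\pm}$), so it suffices to check that $\langle S(l^{\pm}_{ij}),\,t^k_l\rangle=\langle (m^{\pm})^i_j,\,t^k_l\rangle$ for all $k,l$; non-degeneracy on the quotient then forces the claimed equality.

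The key ingredient is the standard Hopf-pairing compatibility with antipodes, $\langle S(x),y\rangle=\langle x,S(y)\rangle$, which I would prove (if needed) by observing that both $\langle S(\cdot),\cdot\rangle$ and $\langle\cdot,S(\cdot)\rangle$ are the convolution inverse of $\langle\cdot,\cdot\rangle$ in $\mathrm{Hom}(U_q^{\mathrm{ext}}(\mathfrak g)\otimes\mathcal O_q(G),k)$. Combined with the fact that in $\mathcal O_q(G)$ the matrix $T=(t^i_j)$ becomes invertible with $S(T)=T^{-1}$, this yields
\begin{equation*}
\langle S(l^{\pm}_{ij}),\,t^k_l\rangle\;=\;\langle l^{\pm}_{ij},\,(T^{-1})^k_l\rangle.
\end{equation*}

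What remains is to evaluate $\langle l^{\pm}_{ij},(T^{-1})^k_l\rangle$. I would use the group-like coproduct $\Delta(l^{\pm}_{ij})=\sum_m l^{\pm}_{im}\otimes l^{\pm}_{mj}$ and pair against the scalar identity $\sum_m t^k_m(T^{-1})^m_l=\delta^k_l$ to obtain a linear system (in the unknowns $\langle l^{\pm}_{ij},(T^{-1})^k_l\rangle$) whose coefficient matrix is exactly $R$ (respectively $R^{-1}$). Solving via $RR^{-1}=I$ then reproduces precisely the pairings $\langle(m^{+})^i_j,t^k_l\rangle=R^{ik}_{jl}$ and $\langle(m^{-})^i_j,t^k_l\rangle=(R^{-1})^{ki}_{lj}$ recorded in Proposition~\ref{weakly}, completing the identification.

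The main obstacle is bookkeeping rather than ideas. The FRT and Majid conventions differ by conjugation with the flip $P$ (Remark~\ref{note}), and Remark~\ref{op} further records that $\widetilde{U(R)}$ is the opposite of the FRT algebra $U_R$; the matrix identity $S(T)=T^{-1}$ is only available after passing to the quotient where $T$ is invertible. One therefore has to be scrupulous about which pair of upper/lower indices is being swapped when matching $\langle l^{\pm}_{ij},(T^{-1})^k_l\rangle$ to the correct $(m^{\pm})$-pairing, lest one erroneously conclude $S(l^{\pm})=(m^{\mp})$; aligning the conventions consistently (per the paper's stated choice to follow Majid's notation throughout) is where the actual work lies.
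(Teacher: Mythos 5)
Your route is genuinely different from the paper's, and the comparison is instructive. The paper does not use the pairing with $\mathcal{O}_q(G)$ at all: it simply writes the relations $RL_2^{\pm}L_1^{\pm}=L_1^{\pm}L_2^{\pm}R$, $RL_2^{-}L_1^{+}=L_1^{+}L_2^{-}R$ and $\Delta(L^{\pm})=L^{\pm}\otimes L^{\pm}$ in components, applies $S$ (using only that the antipode is an algebra anti-morphism and a coalgebra anti-morphism), and observes that the elements $S(l^{\pm}_{ij})$ then satisfy verbatim the defining relations $(C4)$ of the generators $(m^{\pm})^{i}_{j}$ of $\widetilde{U(R)}$, including the flipped matrix coproduct $\Delta((m^{\pm})^{i}_{j})=(m^{\pm})^{a}_{j}\otimes(m^{\pm})^{i}_{a}$. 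This is purely formal, needs no non-degeneracy, and therefore applies to any invertible $R$ admitting a quotient Hopf algebra --- which matters, because the lemma is later invoked for $R$-matrices coming from non-vector representations (Section 4), where the non-degeneracy of the induced pairing that your argument leans on is not established in the paper. Your approach, when it works, buys something slightly stronger (a genuine equality of two independently defined elements rather than a verification that $S(l^{\pm})$ realizes the $m^{\pm}$-relations), but at the cost of extra hypotheses.

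There is also a concrete gap: the one step that decides between $S(l^{\pm})=(m^{\pm})$ and $S(l^{\pm})=(m^{\mp})$ is exactly the index computation you defer. Running your scheme naively --- $\langle l^{+}_{ia},t^{k}_{m}\rangle=R^{ki}_{ma}$, pair the group-like coproduct against $\sum_m t^{k}_{m}(T^{-1})^{m}_{l}=\delta^{k}_{l}$, invert --- yields $\langle S(l^{+}_{ij}),t^{k}_{l}\rangle=(R^{-1})^{ki}_{lj}$, which is precisely the pairing of $(m^{-})^{i}_{j}$ in Proposition \ref{weakly}, i.e.\ the wrong sign. Rescuing the computation requires simultaneously tracking the $P$-conjugation of Remark \ref{note} (so that the $R$ defining $l^{\pm}$ and the $R$ defining $m^{\pm}$ differ by $R\mapsto PRP$), the passage to opposite algebras in Remark \ref{op} (which flips the convention $\langle xy,a\rangle=\langle x,a_{(1)}\rangle\langle y,a_{(2)}\rangle$ and hence whether $\langle S(x),a\rangle$ equals $\langle x,S(a)\rangle$ or $\langle x,S^{-1}(a)\rangle$), and the transposed coproduct of $m^{\pm}$. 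Until that bookkeeping is actually carried out and shown to produce $R^{ik}_{jl}$ rather than $(R^{-1})^{ki}_{lj}$, the proposal is a plan rather than a proof; the paper's relation-chasing argument sidesteps this entirely because the antipode reverses products and coproducts in exactly the way that converts the $L$-presentation into the $m$-presentation.
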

\begin{proof}
If there exists a quotient Hopf algebra of $\widetilde{U(R)}$,
according to {\bf Remark \ref{op}}, we obtain the following
relations in this quotient Hopf algebra:
$RL_{2}^{\pm}L_{1}^{\pm}=L_{1}^{\pm}L_{2}^{\pm}R,$ $
RL_{2}^{-}L_{1}^{+}=L_{1}^{+}L_{2}^{-}R, $ and $
\Delta(L^{\pm})=L^{\pm}\otimes L^{\pm}$. We can describe these
relations in view of the entries in the matrix $R$ and $L^{\pm}$.
For example, for any fixed $i,j,k,l$, we have $
(RL_{2}^{\pm}L_{1}^{\pm})^{ij}_{kl}=(L_{1}^{\pm}L_{2}^{\pm}R)^{ij}_{kl}
$, then we obtain the following equality on the left hand side
$$\left.
\begin{array}{rl}
(RL_{2}^{\pm}L_{1}^{\pm})^{ij}_{kl}
&=R^{ij}_{ab}(I\otimes L^{\pm})^{ab}_{mn}(L^{\pm}\otimes I)^{mn}_{kl}
=R^{ij}_{ab}\delta_{am}(l^{\pm})_{bn}(l^{\pm})_{mk}\delta_{nl}\\
&=R^{ij}_{mb}(l^{\pm})_{bl}(l^{\pm})_{mk}
=R^{ij}_{mn}(l^{\pm})_{nl}(l^{\pm})_{mk},
\end{array}
\right.$$
and the equality by the right hand side
$$(L_{1}^{\pm}L_{2}^{\pm}R)^{ij}_{kl}
=(L^{\pm}\otimes I)^{ij}_{ab}(I\otimes L^{\pm})^{ab}_{mn}R^{mn}_{kl}
=(l^{\pm})_{ia}\delta_{jb}\delta_{am}(l^{\pm})_{bn}R^{mn}_{kl}
=(l^{\pm})_{im}(l^{\pm})_{jn}R^{mn}_{kl}.
$$
So
$
R^{ij}_{mn}(l^{\pm})_{nl}(l^{\pm})_{mk}=(l^{\pm})_{im}(l^{\pm})_{jn}R^{mn}_{kl}.
$
Taking the antipode $S$ on both sides,
we obtain
$$
R^{ij}_{mn}S((l^{\pm})_{mk})S((l^{\pm})_{nl})=S((l^{\pm})_{jn})S((l^{\pm})_{im})R^{mn}_{kl}.
$$
Under the notation $(m^{\pm})^{i}_{j}=S(l_{ij}^{\pm})$,
we have
$$
R^{ij}_{mn}(m^{\pm})^{m}_{k}(m^{\pm})^{n}_{l})=(m^{\pm})^{j}_{n})(m^{\pm})^{i}_{m})R^{mn}_{kl}, \ \text{i.e.}, \
Rm_{1}^{\pm}m_{2}^{\pm}=m_{2}^{\pm}m_{1}^{\pm}R.
$$
We also obtain $Rm_{1}^{+}m_{2}^{-}=m_{2}^{-}m_{1}^{+}R$ by a similar argument.
On the other hand,
\begin{equation*}
\begin{split}
\Delta((m^{\pm})^{i}_{j})&
=\Delta(S(l^{\pm}_{ij}))
=P\circ(S\otimes S)\Delta(l^{\pm}_{ij})
=P\circ(S\otimes S)(l^{\pm}_{ia}\otimes l^{\pm}_{aj})\\
&=P((m^{\pm})^{i} _{a}\otimes (m^{\pm})^{a}_{j})
=(m^{\pm})^{a}_{j}\otimes (m^{\pm})^{i}_{a}.
\end{split}
\end{equation*}
So, the generators $(m^{\pm})^{i}_{j}$ satisfy relation $(C4)$.
This completes the proof.
\end{proof}

\subsection{Inductive construction of $U_{q}({\mathfrak {sl}}_{n})$}
Since $m^{\pm}$ and $R$-matrix will become larger and larger with the growing of rank,
it is a challenge to describe explicitly  the procedure of general inductive construction.
We want to find some tips on some concrete examples.
Majid already described explicitly the case of $U(V^{\vee}(R^{\prime},R_{21}^{-1}),\widetilde{U_{q}^{ext}({\mathfrak {sl}}_{2})},V(R^{\prime},R))\simeq U_{q}({\mathfrak {sl}}_{3})$ in
\cite{majid3}. First of all, in order to capture much more hints coming from the theory of Majid's double-bosonization construction, in what follows, we will describe in detail an example of rank $2$ case: $U(V^{\vee}(R^{\prime},R_{21}^{-1}),\widetilde{U_{q}^{ext}({\mathfrak {sl}}_{3})},V(R^{\prime},R))\simeq U_{q}({\mathfrak {sl}}_4)$.
\begin{example}
Let us start with $R$-matrix datum
$$
R=
\left(
\begin{array}{ccccccccc}
q^{2}&~0&~0&~0&~0&~0&~0&~0&~0\\
0&~q&~0&~q^{2}-1&~0&~0&~0&~0&~0\\
0&~0&~q&~0&~0&~0&~q^{2}-1&~0&~0\\
0&~0&~0&~q&~0&~0&~0&~0&~0\\
0&~0&~0&~0&~q^{2}&~0&~0&~0&~0\\
0&~0&~0&~0&~0&~q&~0&~q^{2}-1&~0\\
0&~0&~0&~0&~0&~0&~q&~0&~0\\
0&~0&~0&~0&~0&~0&~0&~q&~0\\
0&~0&~0&~0&~0&~0&~0&~0&~q^{2}
\end{array}
\right).
$$
Take $R^{\prime}=q^{-2}R$, then $R, R^{\prime}$ give braided groups
$V^{\vee}(R^{\prime},R_{21}^{-1})=k\,\langle f_{i}\mid i=1, 2,
3\rangle$ and $V(R^{\prime},R)=k\,\langle e^{i}\mid i=1, 2,
3\rangle$. Identify $e^{3},f_{3},(m^{+})^{3}_{3}c^{-1}$ with the
additional simple root vectors $E_{3}, F_{3}$ and group-like element
$K_{3}$, then the resulting quantum group
$(V^{\vee}(R^{\prime},R_{21}^{-1}),\widetilde{U_{q}^{ext}({\mathfrak {sl}}_{3})}$,
$V(R^{\prime},R))$ is exactly the quantum group $U_{q}({\mathfrak {sl}}_{4})$ with
$K_{i}^{\frac{1}{3}}, \ i=1, 2$ adjoined.
\end{example}
\begin{proof}
The quantum group normalization constant for $R$ needed for the weakly quasitriangular structure on $U_{q}^{ext}({\mathfrak {sl}}_{3})$ is  $\lambda=q^{-\frac{4}{3}}$ obtained by the facts in \cite{FRT1}.
Moreover,
the $m^{\pm}$-matrix corresponding to the vector representation can be obtained by Lemma \ref{lem1}.
$$
\left.
\begin{array}{rcl}
m^{+}=
\left(
\begin{array}{ccc}
K^{\frac{2}{3}}_{1}K^{\frac{1}{3}}_{2}
&~~(q-q^{-1})
E_{1}K^{-\frac{1}{3}}_{1}K^{\frac{1}{3}}_{2}
&~~q^{-1}(q-q^{-1})
[E_1, E_2]_{q^{-1}}K^{-\frac{1}{3}}_{1}K^{-\frac{2}{3}}_{2}\\
0&K^{-\frac{1}{3}}_{1}K^{\frac{1}{3}}_{2}
&(q-q^{-1})
E_{2}K^{-\frac{1}{3}}_{1}K^{-\frac{2}{3}}_{2}\\
0&0&K^{-\frac{1}{3}}_{1}K^{-\frac{2}{3}}_{2}
\end{array}
\right)
\end{array}
\right.,
$$
$$
\left.
\begin{array}{rcl}
m^{-}=
\left(
\begin{array}{ccc}
K^{-\frac{2}{3}}_{1}K^{-\frac{1}{3}}_{2}&0&0\\
(q-q^{-1})
K^{\frac{1}{3}}_{1}K^{-\frac{1}{3}}_{2}F_{1}
&K^{\frac{1}{3}}_{1}K^{-\frac{1}{3}}_{2}&0\\
q(q-q^{-1})
K^{\frac{1}{3}}_{1}K^{\frac{2}{3}}_{2}[F_2, F_1]_q
&~~(q-q^{-1})
K^{\frac{1}{3}}_{1}K^{\frac{2}{3}}_{2}F_{2}
&~~K^{\frac{1}{3}}_{1}K^{\frac{2}{3}}_{2}
\end{array}
\right)
\end{array}
\right.,
$$
where $[E_1, E_2]_{q^{-1}}=E_1E_2-q^{-1}E_2E_1$, $[F_2, F_1]_q=F_2F_1-qF_1F_2$.
By Corollary \ref{cor1}, we get
$$
[e^{3},f_{3}]=\frac{(m^{+})^{3}_{3}c^{-1}-c(m^{-})^{3}_{3}}{q-q^{-1}}, \Longrightarrow [E_{3},F_{3}]=\frac{K_{3}-K_{3}^{-1}}{q-q^{-1}}.
$$
$
E_{3}K_{3}=e^{3}(m^{+})^{3}_{3}c^{-1}=\lambda R^{33}_{ab}(m^{+})^{a}_{3}e^{b}c^{-1}
=R^{33}_{ab}(m^{+})^{a}_{3}c^{-1}e^{b}= R^{33}_{33}(m^{+})^{3}_{3}c^{-1}e^{3}=q^{2}K_{3}E_{3}.
$
From the expression of $(m^{+})^{i}_{i}$,
we have
$
(m^{+})^{2}_{2}K_{1}=(m^{+})^{1}_{1},(m^{+})^{3}_{3}K_{2}=(m^{+})^{2}_{2}.
$
Associating with the cross relation
$
e^{3}(m^{+})^{i}_{i}=\lambda R^{i3}_{ab}(m^{+})^{a}_{i}e^{b},
$
we obtain
$$
\left.
\begin{array}{l}
e^{3}(m^{+})^{1}_{1}=q^{-\frac{1}{3}} (m^{+})^{1}_{1}e^{3},\\
e^{3}(m^{+})^{2}_{2}=q^{-\frac{1}{3}} (m^{+})^{2}_{2}e^{3},\\
e^{3}(m^{+})^{3}_{3}=q^{\frac{2}{3}} (m^{+})^{3}_{3}e^{3},
\end{array}
\right\}
\Longrightarrow
\left\{
\begin{array}{l}
e^{3}K_{1}=K_{1}e^{3},\\
e^{3}K_{2}=q^{-1} K_{2}e^{3}.
\end{array}
\right.
\Longleftrightarrow
\left\{
\begin{array}{l}
E_{3}K_{1}=K_{1}E_{3},\\
E_{3}K_{2}=q^{-1} K_{2}E_{3}.
\end{array}
\right.
$$

In order to explore the relations between $F_{3}$ and $K_{i},i=1,2,3$,
we have
$K_{3}F_{3}=(m^{+})^{3}_{3}c^{-1}f_{3}=\frac{1}{\lambda}(m^{+})^{3}_{3}f_{3}c^{-1}
=\frac{1}{\lambda}\lambda f_{3}(m^{+})^{3}_{3}R^{33}_{33}c^{-1}=f_{3}(m^{+})^{3}_{3}c^{-1}R^{33}_{33}
=q^{2}F_{3}K_{3}
$
through the cross relation
$
(m^{+})^{i}_{i}f_{3}=\lambda f_{b}(m^{+})^{i}_{a}R^{ab}_{i3}.
$
Similarly,
$$
\left.
\begin{array}{l}
(m^{+})^{i+1}_{i+1}K_{i}=(m^{+})^{i}_{i},\ i=1, 2,\\
(m^{+})^{i}_{i}f_{3}=q^{-\frac{1}{3}}f_{3}(m^{+})^{i}_{i}, \ i=1, 2,\\
(m^{+})^{3}_{3}f_{3}=q^{\frac{2}{3}}f_{3}(m^{+})^{3}_{3}.
\end{array}
\right\}
\Longrightarrow
\left\{
\begin{array}{l}
f_{3}K_{1}=K_{1}f_{3},\\
f_{3}K_{2}=q K_{2}f_{3}.
\end{array}
\right.
\Longleftrightarrow
\left\{
\begin{array}{l}
F_{3}K_{1}=K_{1}F_{3},\\
F_{3}K_{2}=q K_{2}F_{3}.
\end{array}
\right.
$$

Then the relations between $K_{3}$ and $E_{i},F_{i},i=1,2$ are given as follows
$$
E_{1}K_{3}
=E_{1}K^{-\frac{1}{3}}_{1}K^{-\frac{2}{3}}_{2}c^{-1}
=q^{-\frac{2}{3}}q^{\frac{2}{3}}K^{-\frac{1}{3}}_{1}K^{-\frac{2}{3}}_{2}E_{1}c^{-1}
=K^{-\frac{1}{3}}_{1}K^{-\frac{2}{3}}_{2}c^{-1}E_{1}
=K_{3}E_{1},
$$
$$
E_{2}K_{3}
=E_{2}K^{-\frac{1}{3}}_{1}K^{-\frac{2}{3}}_{2}c^{-1}
=q^{\frac{1}{3}}q^{-\frac{4}{3}}K^{-\frac{1}{3}}_{1}K^{-\frac{2}{3}}_{2}E_{2}c^{-1}
=q^{-1}K^{-\frac{1}{3}}_{1}K^{-\frac{2}{3}}_{2}c^{-1}E_{2}
=q^{-1}K_{3}E_{2},
$$
$$
F_{1}K_{3}
=F_{1}K^{-\frac{1}{3}}_{1}K^{-\frac{2}{3}}_{2}c^{-1}
=q^{\frac{2}{3}}q^{-\frac{2}{3}}K^{-\frac{1}{3}}_{1}K^{-\frac{2}{3}}_{2}F_{1}c^{-1}
=K^{-\frac{1}{3}}_{1}K^{-\frac{2}{3}}_{2}c^{-1}F_{1}
=K_{3}F_{1},
$$
$$
F_{2}K_{3}
=F_{2}K^{-\frac{1}{3}}_{1}K^{-\frac{2}{3}}_{2}c^{-1}
=q^{-\frac{1}{3}}q^{\frac{4}{3}}K^{-\frac{1}{3}}_{1}K^{-\frac{2}{3}}_{2}F_{2}c^{-1}
=qK^{-\frac{1}{3}}_{1}K^{-\frac{2}{3}}_{2}c^{-1}F_{2}
=qK_{3}F_{2}.
$$

Moreover,
$\Delta(E_{3})=E_{3}\otimes K_{3}+1\otimes E_{3}$,
$\Delta(F_{3})=F_{3}\otimes 1+K_{3}^{-1}\otimes F_{3}$
can be obtained by Corollary \ref{cor1}.
The most important relations are the $q$-Serre relations.
Since the generators $E_{i}$'s belong to $(m^{+})^{i}_{i+1}, i=1, 2$,
we will consider the following equalities
$$
\left.
\begin{array}{l}
(m^{+})^{1}_{2}=(q-q^{-1})E_{1}(m^{+})^{2}_{2},\\
e^{3}(m^{+})^{1}_{2}=\lambda R^{13}_{ab}(m^{+})^{a}_{2}e^{b}=\lambda q(m^{+})^{1}_{2}e^{3},\\
e^{3}(m^{+})^{2}_{2}=\lambda R^{23}_{ab}(m^{+})^{a}_{2}e^{b}=\lambda q(m^{+})^{2}_{2}e^{3}.
\end{array}
\right\}
\Longrightarrow
e^{3}E_{1}=E_{1}e^{3},
\Longrightarrow
E_{3}E_{1}=E_{1}E_{3}.
$$
$$
\left.
\begin{array}{l}
(m^{+})^{2}_{3}=(q-q^{-1})E_{2}(m^{+})^{3}_{3},\\
e^{3}(m^{+})^{2}_{3}=\lambda R^{23}_{ab}(m^{+})^{a}_{3}e^{b}=\lambda q(m^{+})^{2}_{3}e^{3}+\lambda (q^{2}-1)(m^{+})^{3}_{3}e^{2},\\
e^{3}(m^{+})^{3}_{3}=\lambda R^{33}_{ab}(m^{+})^{a}_{2}e^{b}=\lambda q^{2}(m^{+})^{3}_{3}e^{3}.
\end{array}
\right\}
\Longrightarrow
e^{2}=e^{3}E_{2}-q^{-1}E_{2}e^{3}.
$$
So, we need to explore the relation between $e^{2}$ and $e^{3}$. Note that
$e^{2}e^{3}=R'{}^{32}_{ab}e^{a}e^{b}=q^{-2}R^{32}_{ab}e^{a}e^{b}=q^{-2}R^{32}_{32}e^{3}e^{2}=q^{-1}e^{3}e^{2}$,
then combining with $e^{2}=e^{3}E_{2}-q^{-1}E_{2}e^{3}$,
we obtain
$$
(E_{3})^{2}E_{2}-(q+q^{-1})E_{3}E_{2}E_{3}+E_{2}(E_{3})^{2}=0.
$$
On the other hand,
we need to know another relation between $e^{2}$ and $E_{2}$,
which can be explored by the following cross relations
$$
\left.
\begin{array}{l}
(m^{+})^{2}_{3}=(q-q^{-1})E_{2}(m^{+})^{3}_{3},\\
e^{2}(m^{+})^{2}_{3}
=\lambda q^{2}(m^{+})^{2}_{3}e^{2},\\
e^{2}(m^{+})^{3}_{3}
=\lambda q(m^{+})^3_3e^3,\\
e^{2}=e^{3}E_{2}-q^{-1}E_{2}e^{3}.
\end{array}
\right\}
\Longrightarrow
(E_{2})^{2}E_{3}-(q+q^{-1})E_{2}E_{3}E_{2}+E_{3}(E_{2})^{2}=0.
$$

The $q$-Serre relation of $F_{i},1\leq i\leq 3$ can be obtained similarly.
Then the resulting quantum group is just the quantized enveloping algebra $U_{q}({\mathfrak {sl}}_{4})$, where
$e^{i},f_{i},i=1,2$ can be expressed by the $q$-commutators with generators $E_{i},F_{i},i=1,2,3.$
\end{proof}
\begin{remark}
In the above concrete example, we find that it is not necessary to know the entire $m^{\pm}$-matrix for determining the structure of resulting quantum group, except for the diagonal and minor
diagonal entries. At least, corresponding to the vector
representation of $U_q(\mathfrak g)$, all the simple root vectors
$E_{i}, F_{i}$ and group-like elements $K_{i}$ are included in the diagonal and minor diagonal
entries. Other entries in $m^{\pm}$ are filled by non-simple root
vectors generated by $q$-commutators with generators
$E_{i}, F_{i}$ and $K_{i}$. Moreover, the explicit expressions of the
diagonal and minor diagonal entries in $m^{\pm}$ can be easily
obtained by Lemma \ref{lem1}.
\end{remark}
For the $A$ series,
the data $R, R^{\prime},m^{\pm}$ as above are deduced from vector representation,
which inspires us to start with the vector representation when considering the general rank-inductive construction.
The $R_{VV}$-matrix of vector representation $T_{V}$  satisfies the quadratic equation
$(PR_{VV}-q^{\frac{n-1}{n}}I)(PR_{VV}+q^{-\frac{n+1}{n}}I)=0$.
So setting $R=q^{\frac{n+1}{n}}R_{VV},R^{\prime}=q^{-2}R,$
then we have $(PR+I)(PR^{\prime}-I)=0,$
and
$
R^{ij}_{kl}
=qq^{\delta_{ij}}\delta_{ik}\delta_{jl}+(q^{2}-1)\delta_{il}\delta_{jk}\theta(j-i), \
$
where
$$\theta(k)=
\left\{
\begin{array}{lcl}
1&~~&k>0,\\
0&~~&k\leq 0.
\end{array}
\right.
$$

On the other hand,
according to Lemma \ref{lem1},
we obtain the following
\begin{lemma}\label{alemma}
Corresponding to the vector representation,
the diagonal and minor diagonal entries in $FRT$-matrix $m^{\pm}$ of $U_{q}^{ext}({\mathfrak {sl}}_{n})$ are given by
\begin{gather*}
(m^{+})^{i}_{i+1}=(q-q^{-1})
E_{i}K^{-\frac{1}{n}}_{1}
K^{-\frac{2}{n}}_{2}\cdots K^{-\frac{i-1}{n}}_{i-1}
K^{-\frac{i}{n}}_{i}K^{\frac{n-(i+1)}{n}}_{i+1}\cdots K^{\frac{n-(n-1)}{n}}_{n-1}, \quad 1\leq i\leq n-1,
\\
(m^{+})^{i}_{i}=K^{-\frac{1}{n}}_{1}K^{-\frac{2}{n}}_{2}\cdots K^{-\frac{i-1}{n}}_{i-1}
K^{\frac{n-i}{n}}_{i}\cdots K^{\frac{n-(n-1)}{n}}_{n-1}, \quad 1\leq i\leq n.
\\
(m^{-})^{i+1}_{i}=(q-q^{-1})
K^{\frac{1}{n}}_{1}K^{\frac{2}{n}}_{2}\cdots K^{\frac{i-1}{n}}_{i-1}
K^{\frac{i}{n}}_{i}K^{-\frac{n-(i+1)}{n}}_{i+1}\cdots K^{-\frac{n-(n-1)}{n}}_{n-1}F_{i},\quad 1\leq i\leq n-1,
\\
(m^{-})^{i}_{i}=K^{\frac{1}{n}}_{1}K^{\frac{2}{n}}_{2}\cdots K^{\frac{i-1}{n}}_{i-1}
K^{-\frac{n-i}{n}}_{i}\cdots K^{-\frac{n-(n-1)}{n}}_{n-1},\quad 1\leq i\leq n.
\end{gather*}
\end{lemma}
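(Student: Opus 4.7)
The plan is to apply Lemma \ref{lem1}, which reduces the problem to computing $S(l^{\pm}_{ij})$ for the relevant entries, and then to identify the resulting elements with the claimed expressions in the standard generators of $U_q^{ext}({\mathfrak {sl}}_n)$. Since the $l^{\pm}_{ij}$ lie in $U_R \subset A(R)^{\ast}$, they are characterized by the FRT pairings $\langle l^{+}_{ij}, t^{k}_{l}\rangle = R^{ki}_{lj}$ and $\langle l^{-}_{ij}, t^{k}_{l}\rangle = (R^{-1})^{ik}_{jl}$. Because this pairing descends to a non-degenerate dual pairing $(\mathcal{O}_q(SL_n), U_q^{ext}({\mathfrak {sl}}_n))$, the formulas determine the $l^{\pm}_{ij}$ uniquely in the quotient.

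The first step is to compute the diagonal entries. From the explicit form $R^{ij}_{kl} = q q^{\delta_{ij}}\delta_{ik}\delta_{jl} + (q^2-1)\delta_{il}\delta_{jk}\theta(j-i)$, only the first term contributes to $\langle l^{+}_{ii}, t^{k}_{k}\rangle$, yielding $q\cdot q^{\delta_{ki}}$. This forces $l^{+}_{ii}$ to be group-like acting on the basis vector $x_k$ of the vector module $V$ by $q\cdot q^{\delta_{ki}}$, and matching this eigenvalue against the Cartan action pins down $l^{+}_{ii}$ as a specific product of fractional powers of $K_1,\dots,K_{n-1}$ corresponding to the weight of $x_i$ in $V$ viewed as an ${\mathfrak {sl}}_n$-module. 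For the sub-diagonal entries, only the $(q^2-1)$ off-diagonal term of $R$ contributes, and the support is supported at a single index pair, which forces $l^{+}_{i,i+1}$ to be proportional to a simple root vector times an appropriate group-like with scalar $(q-q^{-1})$; the analogous analysis applies to $l^{-}_{i+1,i}$ using $R^{-1}$.

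The second step is to apply the antipode $S$. This is mechanical: using $S(K_j) = K_j^{-1}$, $S(XY) = S(Y)S(X)$, and the (fractional) commutation relations between $E_i, F_i$ and the $K_j$'s, all signs of exponents flip in the $K$-factors and the simple root vectors migrate to the correct side, producing the displayed formulas with $E_i$ on the left in $(m^{+})^{i}_{i+1}$ and $F_i$ on the right in $(m^{-})^{i+1}_{i}$, each carrying the prefactor $(q-q^{-1})$.

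The main obstacle is the bookkeeping of the fractional powers of $K_j$. These arise because the weights of the vector representation $V$ of ${\mathfrak {gl}}_n$ must be projected onto the ${\mathfrak {sl}}_n$-weight lattice via $\varepsilon_k \mapsto \varepsilon_k - \frac{1}{n}\sum_j \varepsilon_j$, introducing the $\frac{k}{n}$-fractions that motivated the extension $U_q^{ext}({\mathfrak {sl}}_n) \supset U_q({\mathfrak {sl}}_n)$ in the first place. The verification amounts to showing that in each of the two index ranges $j \leq i-1$ (exponent $-\frac{j}{n}$) and $j \geq i$ (exponent $\frac{n-j}{n}$), the combined group-like element acts on $x_k$ by exactly $q\cdot q^{\delta_{ki}}$, which is a direct computation on the vector representation.
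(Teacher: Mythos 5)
Your overall strategy -- reduce to computing $S(l^{\pm}_{ij})$ via Lemma \ref{lem1}, read off the $L$-functionals from the $R$-matrix data of the vector representation, and then apply the antipode -- is exactly what the paper intends (the paper in fact offers no argument at all beyond the phrase ``according to Lemma \ref{lem1}''), and your final weight check is the right verification: one does need to confirm that the displayed product of fractional powers of the $K_j$ acts on $x_k$ with eigenvalue $\lambda R^{ik}_{ik}=q^{-\frac{1}{n}+\delta_{ik}}$, which indeed works out in both index ranges.

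There is, however, a genuine gap in the identification step. The pairing values $\langle l^{\pm}_{ij},t^{k}_{l}\rangle$ record only the image of $l^{\pm}_{ij}$ in the $n^{2}$-dimensional quotient of $U_{q}^{ext}({\mathfrak{sl}}_{n})$ given by the vector representation; they cannot by themselves ``force'' $l^{+}_{ii}$ to be group-like, nor pin down \emph{which} group-like element multiplies $E_i$ in $l^{+}_{i,i+1}$ (all elements $E_iK^{\mu}$ have the same single nonzero matrix entry on $V$, so the action on $V$ does not distinguish them). Non-degeneracy of the pairing with $\mathcal{O}_q(SL_n)$ only becomes usable after you know the pairing with \emph{all} of $\mathcal{O}_q(SL_n)$, and the missing ingredient that supplies this is the matrix coproduct $\Delta((m^{\pm})^{i}_{j})=(m^{\pm})^{a}_{j}\otimes(m^{\pm})^{i}_{a}$ from $(C4)$: it determines the pairing with arbitrary products $t^{k_1}_{l_1}\cdots t^{k_m}_{l_m}$ from the degree-one data of the whole matrix, and only then does non-degeneracy identify the entries (equivalently, one recognizes $m^{\pm}$ as the partial evaluations of the universal $R$-matrix and its inverse on $V$, from which triangularity, group-likeness of the diagonal, and the exact $K$-factors are read off). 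A second, more mechanical pitfall: with the index placement actually written in the paper, $\langle l^{+}_{ij},t^{k}_{l}\rangle=R^{ki}_{lj}$ together with Majid's convention $R^{ij}_{kl}=qq^{\delta_{ij}}\delta_{ik}\delta_{jl}+(q^{2}-1)\delta_{il}\delta_{jk}\theta(j-i)$ makes the degree-one pairing of $l^{+}_{i,i+1}$ identically zero (the $\theta$-factor kills it), so the entry you claim is ``supported at a single index pair'' is not the one you think; you must either work with the transposed pairing $\langle(m^{+})^{i}_{j},t^{k}_{l}\rangle=\lambda R^{ik}_{jl}$ of Proposition \ref{weakly} directly on the $m^{\pm}$ entries, or track the $P\circ\cdot\circ P$ conjugation of Remark \ref{note} carefully. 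Neither issue is fatal to the approach, but as written the proof would not go through without these additions.
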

Obviously,
we observe that
$(m^{+})^{i}_{i}K_{i}^{-1}=(m^{+})^{i+1}_{i+1}$ for $1\leq i\leq n-1$ by the above lemma.
With these,
we have the following
\begin{theorem}\label{propA}
For type $A$, let $\lambda=q^{-\frac{n+1}{n}}$, and
identify $e^{n},f_{n},(m^{+})^{n}_{n}c^{-1}$ with the additional simple root vectors $E_{n},F_{n}$ and group-like element $K_{n}$.
Then the resulting quantum group $U(V^{\vee}(R_{21}^{-1},R^{\prime}),\widetilde{U_{q}^{ext}({\mathfrak {sl}}_{n})},V(R,R^{\prime}))$
is exactly the $U_{q}({\mathfrak {sl}}_{n+1})$ with $K_{i}^{\pm\frac{1}{n}}$ adjoined.
\end{theorem}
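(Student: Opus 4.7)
The plan is to mirror the rank-2 example $U_q^{ext}(\mathfrak{sl}_3)\leadsto U_q(\mathfrak{sl}_4)$ given above, but carry it out for general $n$. The key input is Lemma \ref{alemma}, which supplies explicit formulas for the diagonal and sub-diagonal entries of $m^\pm$, and the remark that these are the only entries one really needs. With $R=q^{(n+1)/n}R_{VV}$, $R'=q^{-2}R$, and normalization constant $\lambda=q^{-(n+1)/n}$, the abstract relations in Corollary \ref{cor1} reduce to concrete identities that I would verify in turn.

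First I would install the candidate new generators $E_n:=e^n$, $F_n:=f_n$, $K_n:=(m^+)^n_n c^{-1}$. The relation $[E_n,F_n]=(K_n-K_n^{-1})/(q-q^{-1})$ is immediate from Corollary \ref{cor1} together with $(m^-)^n_n\cdot(m^+)^n_n=1$ encoded in Lemma \ref{alemma}. The Cartan relations $K_nE_n=q^2E_nK_n$ and $K_nF_n=q^{-2}F_nK_n$ follow from a single evaluation of the cross relations using $R^{nn}_{nn}=q^2$. For the mixed $K_i$--$E_n$ and $K_i$--$F_n$ relations with $i<n$, I would use the identity $(m^+)^{i+1}_{i+1}=(m^+)^i_iK_i^{-1}$ from Lemma \ref{alemma} together with $e^n(m^+)^i_i=\lambda R^{in}_{ab}(m^+)^a_ie^b$; since $R^{in}_{ab}$ is diagonal for $i\neq n$, the right-hand side collapses to $\lambda R^{in}_{in}(m^+)^i_ie^n$, and taking the ratio of consecutive diagonal $m^+$ entries produces $E_nK_i=q^{-a_{i,n}}K_iE_n$. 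Symmetrically for $F_n$. The relations between $K_n$ and $E_i, F_i$ for $i<n$ come from commuting the explicit product of $K_j^{\pm 1/n}$'s representing $K_n$ past $E_i, F_i$, whose fractional exponents cancel to produce $K_nE_i=q^{-a_{in}}E_iK_n$, exactly as the four numerical computations in the displayed example show.

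The heart of the proof, and the main obstacle, is the $q$-Serre relations involving $E_n$ (and, dually, $F_n$). For $i\leq n-2$ the relation $E_nE_i=E_iE_n$ will follow from the cross relation applied to $(m^+)^i_{i+1}$, because $R^{in}_{ab}$ picks up only a diagonal piece and the coefficients cancel. For $i=n-1$ the key cross relation $e^n(m^+)^{n-1}_n=\lambda R^{n-1,n}_{ab}(m^+)^a_ne^b$ produces a nonzero off-diagonal contribution from $R^{n-1,n}_{n,n-1}=q^2-1$, forcing an identity of the form $e^{n-1}=e^nE_{n-1}-q^{-1}E_{n-1}e^n$, exactly as in the example. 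Combining this with the braided-vector relation $e^{n-1}e^n=q^{-1}e^ne^{n-1}$ (read off from $R'{}^{n,n-1}_{ab}$) yields the Serre relation $E_n^2E_{n-1}-(q+q^{-1})E_nE_{n-1}E_n+E_{n-1}E_n^2=0$. The mirror Serre relation is obtained from a second cross relation, $e^{n-1}(m^+)^{n-1}_n$ against $e^{n-1}(m^+)^n_n$, paralleling the rank-2 computation; the $F$-side is analogous, using $(m^-)^i_j$ and the braided-covector relations for $f_i$.

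Finally, $\Delta E_n=E_n\otimes K_n+1\otimes E_n$ and $\Delta F_n=F_n\otimes 1+K_n^{-1}\otimes F_n$ are read off from Corollary \ref{cor1} using $\R(T)=m^+$ and $\bar\R(T)=m^-$. At this stage, $E_1,\ldots,E_n,F_1,\ldots,F_n$ and the $K_i^{\pm 1/n}$ satisfy the defining relations of $U_q^{ext}(\mathfrak{sl}_{n+1})$, so there is a surjective Hopf algebra map from the latter to the resulting double-bosonization. The reverse inclusion uses the identity $e^{n-1}=e^nE_{n-1}-q^{-1}E_{n-1}e^n$ extended inductively, which expresses every $e^i$ (and each $f_i$) for $i<n$ as an iterated $q$-commutator of the $E_j,F_j,K_j^{\pm 1/n}$, so the generators of the double-bosonization lie in the image; a triangular decomposition argument then upgrades surjectivity to an isomorphism. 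Throughout, the main technical obstacle is bookkeeping the fractional exponents $K_i^{\pm k/n}$ appearing in $m^\pm$ and verifying that they conspire, after multiplication by $c^{\pm 1}$ and $\lambda^{\pm 1}$, to give integer Cartan exponents; this is precisely where the choice $\lambda=q^{-(n+1)/n}$ is forced.
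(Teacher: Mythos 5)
Your proposal follows essentially the same route as the paper's proof: the same identification of generators $E_n=e^n$, $F_n=f_n$, $K_n=(m^+)^n_nc^{-1}$, the same reliance on Lemma \ref{alemma} for the diagonal and sub-diagonal entries of $m^{\pm}$, the same cross-relation computations yielding the Cartan relations and the pivotal identity $e^{n-1}=e^nE_{n-1}-q^{-1}E_{n-1}e^n$ (combined with $e^{n-1}e^n=q^{-1}e^ne^{n-1}$ and $e^{n-1}E_{n-1}=qE_{n-1}e^{n-1}$ for the two Serre relations), and the same concluding identification of the remaining $e^i,f_i$ with iterated $q$-commutators; your added remarks on surjectivity plus triangular decomposition only make explicit what the paper leaves implicit. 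One small correction: the collapse of $e^n(m^+)^i_i=\lambda R^{in}_{ab}(m^+)^a_ie^b$ to a single term is not because $R^{in}_{ab}$ is diagonal --- it has the off-diagonal entry $R^{in}_{ni}=q^2-1$ for $i<n$ --- but because $(m^+)^n_i=0$ by upper-triangularity of $m^+$; the conclusion is unaffected.
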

\begin{proof}
$(m^{+})^{n}_{n}=K^{-\frac{1}{n}}_{1}K^{-\frac{2}{n}}_{2}\cdots K^{-\frac{n-1}{n}}_{n-1}$
can be obtained by Lemma \ref{alemma}.
$
[E_{n},F_{n}]=\frac{K_{n}-K_{n}^{-1}}{q-q^{-1}},
$
$\Delta(E_{n})=E_{n}\otimes K_{n}+1\otimes E_{n}$,
and
$\Delta(F_{n})=F_{n}\otimes 1+K_{n}^{-1}\otimes F_{n}$
can be deduced easily from Corollary \ref{cor1}.
On the other hand,
we have the following cross relations by Corollary \ref{cor1}
$$
\left.
\begin{array}{l}
(m^{-})^{i+1}_{i}=(q-q^{-1})(m^{-})^{i+1}_{i+1}F_{i},\\
(m^{-})^{i+1}_{i}e^{n}=\lambda R^{n,i+1}_{n,i+1}e^{n}(m^{-})^{i+1}_{i},\\
(m^{-})^{i+1}_{i+1}e^{n}=\lambda R^{n,i+1}_{n,i+1}e^{n}(m^{-})^{i+1}_{i+1},
\end{array}
\right\}
\Longrightarrow
F_{i}e^{n}=e^{n}F_{i},
\Longrightarrow
[E_{n},F_{i}]=0, \
1\leq i \leq n-1.
$$
$$
\left.
\begin{array}{l}
(m^{+})^{i}_{i+1}=(q-q^{-1})E_{i}(m^{+})^{i+1}_{i+1},\\
(m^{+})^{i}_{i+1}f_{n}=\lambda f_{n}(m^{+})^{i}_{i+1}R^{i+1,n}_{i+1,n},\\
(m^{+})^{i+1}_{i+1}f_{n}=\lambda f_{n}(m^{+})^{i+1}_{i+1}R^{i+1,n}_{i+1,n},
\end{array}
\right\}
\Longrightarrow
E_{i}f_{n}=f_{n}E_{i},
\Longrightarrow
[E_{i},F_{n}]=0, \
1\leq i \leq n-1.
$$

With these,
we get
$$[E_{n},F_{i}]=\delta_{ni}\frac{K_{n}-K_{n}^{-1}}{q-q^{-1}},\quad
[E_{i},F_{n}]=\delta_{in}\frac{K_{n}-K_{n}^{-1}}{q-q^{-1}}.
\eqno{(A1)}
$$

The relations between $E_{n},\, F_{n}$ and $K_{i}$, $1\leq i\leq n$ also can be obtained by the cross relations in Corollary \ref{cor1},
$$
\left.
\begin{array}{l}
(m^{+})^{i}_{i}K_{i}^{-1}=(m^{+})^{i+1}_{i+1}, \ 1\leq i\leq n-1,\\
e^{n}(m^{+})^{j}_{j}=\lambda R^{j}_{j}{}^{n}_{n}(m^{+})^{j}_{j}e^{n},\\
R^{jn}_{jn}=q, \ 1\leq j\leq n-1,\
R^{nn}_{nn}=q^{2}.
\end{array}
\right\}
\Longrightarrow
\left\{
\begin{array}{l}
E_{n}K_{j}=K_{j}E_{n}, \ 1\leq j\leq n-2,\\
E_{n}K_{n-1}=q^{-1}K_{n-1}E_{n}.
\end{array}
\right.
\eqno{(A2)}
$$
$$
\left.
\begin{array}{l}
(m^{+})^{i}_{i}K_{i}^{-1}=(m^{+})^{i+1}_{i+1},\ 1\leq i\leq n-1,\\
(m^{+})^{i}_{i}f_{n}=\lambda f_{n}(m^{+})^{i}_{i}R^{in}_{in},\\
R^{jn}_{jn}=q, \ 1\leq j\leq n-1, \
R^{nn}_{nn}=q^{2}.
\end{array}
\right\}
\Longrightarrow
\left\{
\begin{array}{l}
F_{n}K_{j}=K_{j}F_{n}, \ 1\leq j\leq n-2,\\
F_{n}K_{n-1}=qK_{n-1}F_{n}.
\end{array}
\right.
\eqno{(A3)}
$$

In the $U_{q}({\mathfrak {sl}}_{n})$,
$
E_{i}K_{i}=q^{2}K_{i}E_{i},E_{i}K_{i\pm 1}=q^{-1}K_{i\pm 1}E_{i},E_{i}K_{j}=K_{j}E_{i},j\neq i\pm 1;
F_{i}K_{i}=q^{-2}K_{i}F_{i},F_{i}K_{i\pm 1}=qK_{i\pm 1}F_{i},F_{i}K_{j}=K_{j}F_{i},j\neq i\pm 1.
$
Then we get the relations between $K_{n}$ and $E_{i}, F_{i}$,
where$ 1\leq i\leq n-2$.
$$
\left\{
\begin{array}{l}
E_{i}K_{n}
=q^{\frac{i-1}{n}}q^{-\frac{2i}{n}}q^{\frac{i+1}{n}}
K^{-\frac{1}{n}}_{1}\cdots
K^{-\frac{n-1}{n}}_{n-1}c^{-1}E_{i}
=K_{n}E_{i},\\
F_{i}K_{n}
=q^{-\frac{i-1}{n}}q^{\frac{2i}{n}}q^{-\frac{i+1}{n}}
K^{-\frac{1}{n}}_{1}\cdots
K^{-\frac{n-1}{n}}_{n-1}c^{-1}F_{i}
=K_{n}F_{i},\\
E_{n-1}K_{n}
=q^{\frac{n-2}{n}}q^{-\frac{2(n-1)}{n}}
K^{-\frac{1}{n}}_{1}\cdots
K^{-\frac{n-2}{n}}_{n-2}K^{-\frac{n-1}{n}}_{n-1}c^{-1}E_{n-1}
=q^{-1}K_{n}E_{n-1},\\
F_{n-1}K_{n}
=q^{-\frac{n-2}{n}}q^{\frac{2(n-1)}{n}}
K^{-\frac{1}{n}}_{1}\cdots
K^{-\frac{n-2}{n}}_{n-2}K^{-\frac{n-1}{n}}_{n-1}c^{-1}F_{n-1}
=qK_{n}F_{n-1}.
\end{array}
\right.
\eqno{(A4)}
$$

We want to explore the relations between $e^{n}$ and $E_{i},1\leq i\leq n-1,$
and observe that $E_{i}$ just belongs to the entry $(m^{+})^{i}_{i+1}$,
so
$$
\left.
\begin{array}{l}
(m^{+})^{i}_{i+1}=(q-q^{-1})E_{i}(m^{+})^{i+1}_{i+1},\\
e^{n}(m^{+})^{i}_{i+1}=\lambda R^{in}_{in}(m^{+})^{i}_{i+1}e^{n},1\leq i\leq n-2,\\
e^{n}(m^{+})^{n-1}_{n}
=\lambda R^{n-1,n}_{n-1,n}(m^{+})^{n-1}_{n}e^{n}+\lambda R^{n-1,n}_{n,n-1}(m^{+})^{n}_{n}e^{n-1},\\
e^{n}(m^{+})^{i+1}_{i+1}=\lambda R^{i+1,n}_{i+1,n}(m^{+})^{i+1}_{i+1}e^{n},\\
R^{jn}_{jn}=q,1\leq j\leq n-1,R^{nn}_{nn}=q^{2},R^{n-1,n}_{n,n-1}=q^{2}-1.
\end{array}
\right\}
\Rightarrow
\left\{
\begin{array}{l}
E_{n}E_{i}=E_{i}E_{n},1\leq i\leq n-2,\\
e^{n-1}=e^{n}E_{n-1}-q^{-1}E_{n-1}e^{n}.
\end{array}
\right.
$$
$$
\left.
\begin{array}{l}
(m^{-})^{i+1}_{i}=(q-q^{-1})(m^{-})^{i+1}_{i+1}F_{i},\\
f_{n}(m^{-})^{i+1}_{i}=\lambda R^{ni}_{ni}(m^{-})^{i+1}_{i}f_{n},1\leq i\leq n-2,\\
f_{n}(m^{-})^{n}_{n-1}=\lambda R^{n,n-1}_{n,n-1}(m^{-})^{n}_{n-1}f_{n}+\lambda R^{n-1,n}_{n,n-1}(m^{-})^{n}_{n}f_{n-1},\\
f_{n}(m^{-})^{i+1}_{i+1}=\lambda R^{n,i+1}_{n,i+1}(m^{-})^{i+1}_{i+1}f_{n},\\
\end{array}
\right\}
\Rightarrow
\left\{
\begin{array}{l}
F_{n}F_{i}=F_{i}F_{n},1\leq i\leq n-2,\\
f_{n-1}=qf_{n}F_{n-1}-F_{n-1}f_{n}.
\end{array}
\right.
$$
Note that
$
e^{n-1}e^{n}=R'{}^{n,n-1}_{ab}e^{a}e^{b}=q^{-2}R^{n,n-1}_{n,n-1}e^{n}e^{n-1}=q^{-1}e^{n}e^{n-1}
$
and
$
f_{n-1}f_{n}=f_{b}f_{a}R'{}^{ab}_{n-1,n}=q^{-2}f_{n}f_{n-1}R^{n-1,n-1}_{n-1,n}=q^{-1}f_{n}f_{n-1}.
$
Then we obtain
$$
\left\{
\begin{array}{l}
(E_{n})^{2}E_{n-1}-(q+q^{-1})E_{n}E_{n-1}E_{n}+E_{n-1}(E_{n})^{2}=0,\\
(F_{n})^{2}F_{n-1}-(q+q^{-1})F_{n}F_{n-1}F_{n}+F_{n-1}(F_{n})^{2}=0.
\end{array}
\right.
\eqno{(A5)}
$$

On the other hand,
$$
\left.
\begin{array}{l}
(m^{+})^{n-1}_{n}=(q-q^{-1})E_{n-1}(m^{+})^{n}_{n},\\
e^{n-1}(m^{+})^{n-1}_{n}=\lambda q^{2}(m^{+})^{n-1}_{n}e^{n-1},\\
e^{n-1}(m^{+})^{n}_{n}=\lambda q(m^{+})^{n}_{n}e^{n-1}.
\end{array}
\right\}
\Longrightarrow
e^{n-1}E_{n-1}=qE_{n-1}e^{n-1}.
$$
$$
\left.
\begin{array}{l}
(m^{-})^{n}_{n-1}=(q-q^{-1})(m^{-})^{n}_{n}F_{n-1},\\
f_{n-1}(m^{-})^{n}_{n-1}=\lambda q^{2}(m^{-})^{n}_{n-1}f_{n-1},\\
f_{n-1}(m^{-})^{n}_{n}=\lambda q(m^{-})^{n}_{n}f_{n-1}.
\end{array}
\right\}
\Longrightarrow
f_{n-1}F_{n-1}=qF_{n-1}f_{n-1}.
$$
Combining with $e^{n-1}=e^{n}E_{n-1}-q^{-1}E_{n-1}e^{n}$
and
$f_{n-1}=qf_{n}F_{n-1}-F_{n-1}f_{n}$,
we get
$$
\left\{
\begin{array}{l}
(E_{n-1})^{2}E_{n}-(q+q^{-1})E_{n-1}E_{n}E_{n-1}+E_{n}(E_{n-1})^{2}=0,\\
(F_{n-1})^{2}F_{n}-(q+q^{-1})F_{n-1}F_{n}F_{n-1}+F_{n}(F_{n-1})^{2}=0.
\end{array}
\right.
\eqno{(A6)}
$$
The other elements $e^{i},f_{j}$ can be identified with non-simple root vectors generated by $q$-commutators with $E_{i},F_{i},K_{i},1\leq i\leq n$.
With the above equalities $(A1)$---$(A6)$,
we prove that the resulting quantum groups is $U_{q}({\mathfrak {sl}}_{n+1})$.
\end{proof}

\subsection{Inductive construction of $U_q(\mathfrak g)$ for the $BCD$ series}
The rank-inductive construction of $U_{q}({\mathfrak {sl}}_{n})$ gives us the confidence to consider the $BCD$ series.
Their Dynkin diagrams are given respectively by the following diagrams,
and the arrow is point to the shorter of the two roots in the diagrams.

\setlength{\unitlength}{1mm}
\begin{picture}(98,6)
\put(6,4){\circle{1}}
\put(6.5,4.2){\line(1,0){12}}
\put(6.5,3.8){\line(1,0){12}}
\put(12,3){$<$}
\put(6,0){$1$}
\put(19,4){\circle{1}}
\put(19.5,4){\line(1,0){12}}
\put(18,0){$2$}
\multiput(32.5,4)(3,0){4}{\line(1,0){2}}
\put(45.5,4){\line(1,0){12}}
\put(58,4){\circle{1}}
\put(54,0){$n-1$}
\put(58.5,4){\line(1,0){12}}
\put(71,4){\circle{1}}
\put(70,0){$n$}
\put(80,3){$:B_{n} \ (n\geq2)$}
\end{picture}

\begin{picture}(98,6)
\put(6,4){\circle{1}}
\put(6.5,4.2){\line(1,0){12}}
\put(6.5,3.8){\line(1,0){12}}
\put(12,3){$>$}
\put(6,0){$1$}
\put(19,4){\circle{1}}
\put(19.5,4){\line(1,0){12}}
\put(18,0){$2$}
\multiput(32.5,4)(3,0){4}{\line(1,0){2}}
\put(45.5,4){\line(1,0){12}}
\put(58,4){\circle{1}}
\put(54,0){$n-1$}
\put(58.5,4){\line(1,0){12}}
\put(71,4){\circle{1}}
\put(70,0){$n$}
\put(80,3){$:C_{n} \ (n\geq3)$}
\end{picture}

\setlength{\unitlength}{1mm}
\begin{picture}(98,15)
\put(6,12){\circle{1}}
\put(18.5,8){\line(-3,1){12}}
\put(6,4){\circle{1}}
\put(6.5,4){\line(3,1){12}}
\put(4,13){$1$}
\put(4,0){$2$}
\put(19,8){\circle{1}}
\put(19.5,8){\line(1,0){12}}
\put(17,4){$3$}
\multiput(32.5,8)(3,0){4}{\line(1,0){2}}
\put(45.5,8){\line(1,0){12}}
\put(58,8){\circle{1}}
\put(58.5,8){\line(1,0){12}}
\put(50,4){$n-1$}
\put(71,8){\circle{1}}
\put(70,4){$n$}
\put(80,6){$:D_{n} \ (n\geq4)$}
\end{picture}

Corresponding to their vector representations,
the matrix $PR_{VV}$ satisfies the following cubic equation \cite{FRT1,klim}:
$$
(PR_{VV}+q^{-1}I)(PR_{VV}-qI)(PR_{VV}-\epsilon q^{\epsilon-N}I)=0.
$$
Then set
$
R=qR_{VV},
R^{\prime}=RPR-(\epsilon q^{\epsilon-N+1}+q^{2})R+(\epsilon q^{\epsilon-N+3}+1)P,
$
we get $(PR+I)(PR^{\prime}-I)=0$.
Moreover,
there is a unique formula for the matrix entries of $R$,
$$R^{ij}_{kl}=qq^{\delta_{ji}-\delta_{ji^{\prime}}}\delta_{ik}\delta_{jl}
+(q^{2}-1)\theta(j-l)(\delta_{il}\delta_{jk}-K^{ij}_{lk}).
$$
Here
$
K^{ij}_{lk}=\epsilon C^{i}_{j}C^{l}_{k},
$
and $C^{m}_{t}=\epsilon_{m}\delta_{mt^{\prime}}q^{-\rho_{m}},$
where
$i^{\prime}=N+1-i$,
let $N=2n$ if $N$ is even,
and $N=2n+1$ if $N$ is odd.
$\rho_{i}=\frac{N}{2}-i$ if $i<i^{\prime}$;
$\rho_{i^{\prime}}=-\rho_{i}$ if $i\leq i^{\prime}$,
$\epsilon=\epsilon_{1}=\cdots=\epsilon_{N}=1$ for $g={\mathfrak {so}}_{N},$
and
$
\rho_{i}=\frac{N}{2}+1-i,\rho_{i^{\prime}}=-\rho_{i}
$
if $i<i^{\prime}$,
$\epsilon_{1}=\cdots=\epsilon_{n}=1,
\epsilon=\epsilon_{n+1}=\cdots=\epsilon_{N}=-1$ for
$g={\mathfrak {sp}}_{N}$.

For their vector representations,
the diagonal and minor diagonal entries we need in $m^{\pm}$ can be obtained by Lemma \ref{lem1}.
\begin{lemma}\label{lemmabcd}
$(1)$
For $U_{q}^{ext}({\mathfrak {so}}_{2n+1}):$
\begin{gather*}
(m^{+})^{i}_{i}=K_{1}K_{2}\cdots K_{n-i}K_{n+1-i}, \quad (m^{+})^{n+1}_{n+1}=1, \quad 1\leq i\leq n,
\\
(m^{+})^{i}_{i+1}=-(q{-}q^{-1})E_{n+1-i}K_{1}K_{2}\cdots K_{n-i}, \quad 1\leq i\leq n{-}1, \quad
(m^{+})^{n}_{n+1}=-c_{0}E_{1},
\\
(m^{-})^{i+1}_{i}=(q{-}q^{-1})K_{1}^{-1}K_{2}^{-1}\cdots K_{n-i}^{-1}F_{n+1-i}, \quad
m^{-(n+1)}_{n}=c_{0}F_{1}, \quad (m^{+})^{i}_{i}(m^{-})^{i}_{i}=1,
\end{gather*}
where
$
c_{0}=(q^{\frac{1}{2}}+q^{-\frac{1}{2}})^{\frac{1}{2}}(q^{\frac{1}{2}}-q^{-\frac{1}{2}}).
$

$(2)$
For
$
U_{q}^{ext}({\mathfrak {sp}}_{2n}):
$
\begin{gather*}
(m^{+})^{i}_{i}=K_{1}^{\frac{1}{2}}K_{2}\cdots K_{n+1-i}, \quad
(m^{+})^{i^{\prime}}_{i^{\prime}}(m^{+})^{i}_{i}=1, \quad 1\leq i\leq n,
\\
(m^{+})^{i}_{i+1}=-(q{-}q^{-1})E_{n+1-i}K_{1}^{\frac{1}{2}}K_{2}\cdots K_{n-i}, \quad 1\leq i\leq n{-}1, \quad
(m^{+})^{n}_{n+1}=-(q^{2}{-}q^{-2})E_{1}K_{1}^{-\frac{1}{2}},
\\
(m^{-})^{i}_{i}=K_{1}^{-\frac{1}{2}}K_{2}^{-1}\cdots K_{n+1-i}^{-1}, \quad (m^{-})^{i^{\prime}}_{i^{\prime}}(m^{-})^{i}_{i}=1, \quad 1\leq i\leq n,
\\
(m^{-})^{i+1}_{i}=(q{-}q^{-1})K_{1}^{-\frac{1}{2}}K_{2}^{-1}\cdots K_{n-i}^{-1}F_{n+1-i}, \quad 1\leq i\leq n{-}1, \quad
(m^{-})^{n+1}_{n}=(q^{2}{-}q^{-2})K_{1}^{\frac{1}{2}}F_{1}.
\end{gather*}

$(3)$
For
$
U_{q}^{ext}({\mathfrak {so}}_{2n}):
$
\begin{gather*}
(m^{+})^{i}_{i}=(K_{1}^{\frac{1}{2}}K_{2}^{\frac{1}{2}})K_{3}\cdots K_{n+1-i}, \quad
(m^{+})^{i^{\prime}}_{i^{\prime}}(m^{+})^{i}_{i}=1, \quad 1\leq i\leq n{-}2,
\\
(m^{+})^{n-1}_{n-1}=K_{1}^{\frac{1}{2}}K_{2}^{\frac{1}{2}}, \quad
(m^{+})^{n}_{n}=K_{1}^{\frac{1}{2}}K_{2}^{-\frac{1}{2}}, \quad
(m^{+})^{n-1}_{n+1}=-(q{-}q^{-1})E_{1}(K_{1}^{-\frac{1}{2}}K_{2}^{\frac{1}{2}}),
\\
(m^{+})^{i}_{i+1}=-(q{-}q^{-1})E_{n+1-i}(K_{1}^{\frac{1}{2}}K_{2}^{\frac{1}{2}})K_{3}\cdots K_{n-i}, \quad 1\leq i\leq n{-}1,
\\
(m^{-})^{i}_{i}=(K_{1}^{-\frac{1}{2}}K_{2}^{-\frac{1}{2}})K_{3}^{-1}\cdots K_{n+1-i}^{-1}, \quad (m^{-})^{i^{\prime}}_{i^{\prime}}(m^{-})^{i}_{i}=1, \quad 1\leq i\leq n{-}2,
\\
(m^{-})^{n-1}_{n-1}=K_{1}^{-\frac{1}{2}}K_{2}^{-\frac{1}{2}}, \quad
(m^{-})^{n}_{n}=K_{1}^{-\frac{1}{2}}K_{2}^{\frac{1}{2}}, \quad
(m^{-})^{n+1}_{n-1}=(q{-}q^{-1})(K_{1}^{\frac{1}{2}}K_{2}^{-\frac{1}{2}})F_{1},
\\
(m^{-})^{i+1}_{i}=(q{-}q^{-1})(K_{1}^{-\frac{1}{2}}K_{2}^{-\frac{1}{2}})K_{3}^{-1}\cdots K_{n-i}^{-1}F_{n+1-i}, \quad 1\leq i\leq n{-}1.
\end{gather*}
\end{lemma}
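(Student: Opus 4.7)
The plan is to apply Lemma \ref{lem1}, which identifies $(m^{\pm})^{i}_{j}$ with $S(l^{\pm}_{ij})$, so the task reduces to computing the FRT-dual generators $l^{\pm}_{ij}$ inside $U_{q}^{ext}(\mathfrak{g})$ and then applying the antipode. Since the pairing determines $l^{\pm}_{ij}$ completely via
\[
\langle l^{+}_{ij},t^{k}_{l}\rangle=R^{ki}_{lj},\qquad \langle l^{-}_{ij},t^{k}_{l}\rangle=(R^{-1})^{ik}_{jl},
\]
and the $R$-matrix here is $R^{ij}_{kl}=(T_{V}\otimes T_{V})(\mathcal R)^{ij}_{kl}$ for the vector representation $T_{V}$, the value $R^{ki}_{lj}$ encodes exactly how one specific element of $U_{q}^{ext}(\mathfrak{g})$ acts on the basis $\{x_{l}\}$ of $V$. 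Thus I will match each $l^{\pm}_{ij}$ against the unique element of the extended algebra whose action on $V$ reproduces the relevant matrix coefficient, and then apply $S$.

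First I would treat the diagonal entries $l^{\pm}_{ii}$. From the formula for $R^{ij}_{kl}$ in the $BCD$ case, setting $j=i$ the pairing reduces to
\[
R^{ki}_{li}=qq^{\delta_{ik}-\delta_{ik'}}\delta_{kl},
\]
so $l^{\pm}_{ii}$ acts on $V$ by the diagonal matrix with eigenvalue $q^{\pm(\delta_{ik}-\delta_{ik'})}$ (up to the overall scalar $q^{\pm 1}$) on $x_{k}$. This matches the action of a unique monomial in the $K_{j}^{\pm 1/r}$ once the weight of $x_{k}$ is expanded in the basis of simple roots, which immediately pins down the exponents displayed in the lemma; applying $S$ then inverts the group-likes and also confirms $(m^{+})^{i}_{i}(m^{-})^{i}_{i}=1$ together with the reflection identity $(m^{\pm})^{i'}_{i'}(m^{\pm})^{i}_{i}=1$, both coming from $\rho_{i'}=-\rho_{i}$. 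For the minor-diagonal entries only the second summand of $R$ survives: the $\delta_{il}\delta_{jk}$ piece produces a simple-root generator $E_{n+1-i}$ or $F_{n+1-i}$ acting between adjacent weight spaces, while the subtracted term $-K^{ij}_{lk}=-\epsilon\epsilon_{i}\epsilon_{l}\delta_{ij'}\delta_{lk'}q^{-\rho_{i}-\rho_{l}}$ is nonzero only when $j=i'$ and accounts precisely for the ``wrap-around'' entries at $(n,n+1)$ for $B_{n},C_{n}$ and at $(n-1,n+1)$ for $D_{n}$, where the corresponding generator is the extreme simple root $E_{1}$ or $F_{1}$ scaled by the appropriate constant. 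Applying $S$ then turns each $E_{i}$ into $-E_{i}K_{i}^{-1}$ and each $F_{i}$ into $-K_{i}F_{i}$, which redistributes the $K$-factors to match the stated formulas.

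The three cases $B_{n}$, $C_{n}$, $D_{n}$ run in parallel, differing only in the weight structure of $V$ and in the coefficients $\epsilon_{m}$, $\rho_{m}$: for $B_{n}$ the middle basis vector $x_{n+1}$ is a zero-weight vector (whence $(m^{+})^{n+1}_{n+1}=1$) and the short root at position $n$ forces the square-root prefactor $c_{0}=(q^{1/2}+q^{-1/2})^{1/2}(q^{1/2}-q^{-1/2})$; for $C_{n}$ the long root at position $1$ forces $(q^{2}-q^{-2})$ prefactors and the fractional power $K_{1}^{\pm 1/2}$; for $D_{n}$ the fork $\{\alpha_{1},\alpha_{2}\}$ produces the symmetric combination $K_{1}^{\pm 1/2}K_{2}^{\pm 1/2}$ together with the off-diagonal coupling at $(n-1,n+1)$ coming from $K^{ij}_{lk}$. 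I expect the main obstacle to be purely bookkeeping: normalizing the fractional powers $K_{j}^{\pm 1/2}$ so that their group-like eigenvalues on each $x_{k}$ reproduce the predicted $q^{\delta_{ik}-\delta_{ik'}}$ exactly, and reconciling the sign and coefficient conventions at the equatorial indices, where the step function $\theta(j-l)$ and the tensor $K^{ij}_{lk}$ interact in a manner different from the generic minor-diagonal case. Once these normalizations are fixed, each identity reduces to a finite verification of a single matrix entry, carried out uniformly across the three series.
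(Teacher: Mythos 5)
The paper gives no written proof of this lemma: it simply asserts that the entries ``can be obtained by Lemma \ref{lem1}'', i.e.\ by taking the known $L$-functionals $l^{\pm}_{ij}$ of the classical FRT construction (whose explicit expressions in terms of $E_i,F_i,K_i$ are tabulated in \cite{klim}) and applying the antipode via $(m^{\pm})^{i}_{j}=S(l^{\pm}_{ij})$. Your overall strategy is the same one; the difference is that you propose to \emph{re-derive} the $l^{\pm}_{ij}$ from scratch, by reading off their matrix coefficients from the pairing $\langle l^{+}_{ij},t^{k}_{l}\rangle=R^{ki}_{lj}$ and then selecting ``the unique element of the extended algebra whose action on $V$ reproduces the relevant matrix coefficient.''

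That identification step is where your argument has a genuine gap. The vector representation is very far from faithful on $U_{q}^{ext}(\mathfrak g)$, so no element is determined by its action on $V$: e.g.\ any element of the (large) kernel of $T_{V}$ can be added to a candidate without changing the matrix coefficients you are matching. What actually pins down $l^{\pm}_{ij}$ is the pairing on generators \emph{together with} the coalgebra constraint $\Delta(L^{\pm})=L^{\pm}\otimes L^{\pm}$ and the triangularity of $L^{\pm}$: these force the diagonal entries to be group-like (hence of the form $K_{\mu}$ in the extended torus, where the weight argument you give does apply) and the near-diagonal entries to be skew-primitive of a prescribed bidegree (hence of the form $\mathrm{const}\cdot E_{j}K_{\mu}$ or $K_{\mu}F_{j}$), after which matching a single matrix coefficient fixes the constant. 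Without invoking this structure — or simply quoting the explicit $L$-functionals from \cite{klim}, as the paper implicitly does — the phrase ``unique element'' is unjustified and the proof does not close. A secondary, more cosmetic issue is that your attribution of the exceptional entries to the tensor $K^{ij}_{lk}$ (the indices where $C^{m}_{t}=\epsilon_{m}\delta_{mt^{\prime}}q^{-\rho_{m}}$ is nonvanishing require $j=i^{\prime}$, which does not hold at the positions $(n,n{+}1)$ for $B_{n},C_{n}$ or $(n{-}1,n{+}1)$ for $D_{n}$) and your placement of the short/long roots do not match the paper's reversed labelling of the Dynkin diagrams; these are repairable bookkeeping slips, but they should be checked once the identification step above is made rigorous.
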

With these,
we have the following
\begin{theorem}
With quantum group normalization constant $\lambda=q^{-1}$.

$(1)$ Type $B$

\noindent
Identify $e^{2n+1},f_{2n+1},(m^{+})^{2n+1}_{2n+1}c^{-1}$ with the additional simple root vectors $E_{n+1}$, $F_{n+1}$ and the group-like element $K_{n+1}$.
Then the resulting quantum group
$U(V^{\vee}(R^{\prime},R_{21}^{-1}),\widetilde{U_{q}^{ext}({\mathfrak {so}}_{2n+1})}$, $V(R^{\prime},R))$
is the quantum group
$U_{q}({\mathfrak {so}}_{2n+3})$.

$(2)$ Type $C$

\noindent
Identify $e^{2n},f_{2n},(m^{+})^{2n}_{2n}c^{-1}$ with the additional simple root vectors $E_{n+1},F_{n+1}$ and the group-like element $K_{n+1}$.
Then the resulting quantum group
$U(V^{\vee}(R^{\prime},R_{21}^{-1}),\widetilde{U_{q}^{ext}({\mathfrak {sp}}_{2n})},V(R^{\prime},R))$
is the quantum group
$U_{q}({\mathfrak {sp}}_{2n+2})$
with $K_1^{\pm\frac{1}{2}}$ adjoined.

$(3)$ Type $D$

\noindent
Identify $e^{2n},f_{2n},(m^{+})^{2n}_{2n}c^{-1}$ with the additional simple root vectors $E_{n+1}, F_{n+1}$ and the group-like element $K_{n+1}$.
Then the resulting quantum group
$U(V^{\vee}(R^{\prime},R_{21}^{-1}),\widetilde{U_{q}^{ext}({\mathfrak {so}}_{2n})},V(R^{\prime},R))$
is the quantum group
$U_{q}({\mathfrak {so}}_{2n+2})$
with
$K_1^{\pm\frac{1}{2}}K_2^{\pm\frac{1}{2}}$,
$K_1^{\pm\frac{1}{2}}K_2^{\mp\frac{1}{2}}$
adjoined.
\end{theorem}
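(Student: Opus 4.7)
My plan is to treat the three cases $B$, $C$, $D$ in parallel by mimicking the template established in Theorem \ref{propA}: feed the explicit diagonal and minor diagonal entries of $m^{\pm}$ from Lemma \ref{lemmabcd} into the cross relations of Corollary \ref{cor1}, together with the explicit $R$-matrix
$$
R^{ij}_{kl}=q\,q^{\delta_{ji}-\delta_{ji^{\prime}}}\delta_{ik}\delta_{jl}+(q^{2}-1)\theta(j-l)(\delta_{il}\delta_{jk}-K^{ij}_{lk}),
$$
and read off the defining relations of the enlarged quantum group. For each type I fix the attachment index ($k=2n+1$ for $B$, $k=2n$ for $C$ and $D$) at which $e^{k},f_{k},(m^{+})^{k}_{k}c^{-1}$ are identified with the new simple-root data $E_{n+1},F_{n+1},K_{n+1}$; the normalization constant is pinned down by $(PR+I)(PR^{\prime}-I)=0$ to be $\lambda=q^{-1}$.

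The first batch of relations is straightforward. Using the duality $(m^{+})^{i^{\prime}}_{i^{\prime}}(m^{+})^{i}_{i}=1$ (types $C,D$) or $(m^{+})^{i}_{i}(m^{-})^{i}_{i}=1$ (type $B$) from Lemma \ref{lemmabcd}, the bracket $[E_{n+1},F_{n+1}]=(K_{n+1}-K_{n+1}^{-1})/(q-q^{-1})$ and the coproducts of $E_{n+1},F_{n+1}$ read off directly from Corollary \ref{cor1}. The commutations $E_{n+1}K_{j}$ and $F_{n+1}K_{j}$ come from applying the cross relation $e^{k}(m^{+})^{j}_{j}=\lambda R^{jk}_{jk}(m^{+})^{j}_{j}e^{k}$ to the diagonal entries of Lemma \ref{lemmabcd}, together with the telescoping identities of the form $(m^{+})^{i}_{i}=(m^{+})^{i+1}_{i+1}K_{\ast}$ visible in that lemma; the computation is entirely analogous to the blocks $(A2)$--$(A4)$ of Theorem \ref{propA}. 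The vanishing $[E_{n+1},F_{i}]=[E_{i},F_{n+1}]=0$ for every $i$ not adjacent to the new node in the enlarged Dynkin diagram then falls out of the cross relations applied to the minor diagonal entries $(m^{\pm})^{i+1}_{i}$, exactly as in block $(A1)$ of Theorem \ref{propA}.

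The delicate step is the $q$-Serre relations at the single node adjacent to the new one, namely $E_{n}$ (resp.\ $F_{n}$) for types $B$ and $C$, and $E_{n-1}$ (resp.\ $F_{n-1}$) for type $D$, where the branching of $D_{n+1}$ forces the attachment away from $E_{n}$. At this attachment node, applying $e^{k}(m^{+})^{j}_{j+1}=\lambda R^{jk}_{ab}(m^{+})^{a}_{j+1}e^{b}$ produces, through the off-diagonal contribution $(q^{2}-1)$ (and its rank-one companion $K^{ij}_{lk}$), an auxiliary element $e^{k-1}$ realized as a $q$-commutator of $e^{k}$ with the adjacent old generator; comparing this identification with the braided-algebra relation $e^{i}e^{j}=R^{\prime}{}^{ji}_{ab}e^{a}e^{b}$ from Proposition \ref{weakly}(2) yields the cubic $q$-Serre relation, and the $f$-side runs in parallel.

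The main obstacle, absent in the type-$A$ analysis, is the rank-one summand $K^{ij}_{lk}$ in the $R$-matrix, which couples only the "antidiagonal" pairs with $i+j=N+1$ and could in principle pollute the Serre computation. I expect that for $B$ and $C$ the attachment indices $k$ and $k-1$ are generic and do not meet the antidiagonal, so the $K^{ij}_{lk}$ term simply drops out of the relevant cross relations; for $D$ the bifurcation demands running the Serre derivation for both $E_{n}$ and $E_{n+1}$ simultaneously, and the balanced group-like factors $K_{1}^{\pm\frac{1}{2}}K_{2}^{\pm\frac{1}{2}}$ and $K_{1}^{\pm\frac{1}{2}}K_{2}^{\mp\frac{1}{2}}$ must be adjoined precisely so that the two halves of the fork close up consistently, as already signalled by the symmetric expressions for $(m^{\pm})^{n-1}_{n-1}$ and $(m^{\pm})^{n}_{n}$ in Lemma \ref{lemmabcd}(3). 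Once these case checks are completed, the full Drinfeld--Jimbo presentation of $U_{q}(\mathfrak{g}_{n+1})$ is recovered and the surjection from the double-bosonization is forced to be an isomorphism by the triangular decomposition, completing the proof.
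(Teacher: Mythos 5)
Your overall template is the right one and matches the paper's: the negative half is delegated to the type-$A$ argument, the commutators of $E_{n+1},F_{n+1}$ with the $K_j$ and the vanishing of $[E_{n+1},F_i]$ come from the diagonal and minor-diagonal entries of Lemma \ref{lemmabcd} fed into the cross relations of Corollary \ref{cor1}, and the Serre relations come from comparing the cross relation at the attachment entry with the braided-algebra relation $e^ie^j=R'{}^{ji}_{ab}e^ae^b$. However, two of your concrete predictions about how the computation goes are wrong, and both would derail the Serre step.

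First, the type $D$ attachment node. In the paper's labelling (see its Dynkin diagrams and Lemma \ref{lemmabcd}(3)) the fork of $D_n$ sits at the nodes $1,2,3$, not at the high-index end; the generator $E_1$ occupies the off-pattern entry $(m^+)^{n-1}_{n+1}$ in the middle of the matrix, while the corner entry $(m^+)^1_2$ contains $E_n$. Consequently the new node $E_{n+1}=e^{2n}$ attaches to $E_n$ through $(m^+)^1_2$, exactly as in types $B$ and $C$, and the fork plays no role whatsoever in the new Serre relations; there is no need to ``run the Serre derivation for both $E_n$ and $E_{n+1}$ simultaneously,'' and attaching at $E_{n-1}$ would produce relations for the wrong pair of nodes. (The adjoined elements $K_1^{\pm\frac12}K_2^{\pm\frac12}$, $K_1^{\pm\frac12}K_2^{\mp\frac12}$ are forced by the group-like entries $(m^\pm)^{n-1}_{n-1},(m^\pm)^n_n$ of Lemma \ref{lemmabcd}(3), not by any closing-up of a fork at the new end.)

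Second, and more seriously, your expectation that the rank-one summand $K^{ij}_{lk}$ ``simply drops out'' of the relevant cross relations is exactly backwards: it is the \emph{only} source of the coupling that produces the Serre relations. In the key cross relation $e^{N}(m^+)^1_2=\lambda R^{1,N}_{a,b}(m^+)^a_2e^b$ (with $N=2n+1$ for $B$, $2n$ for $C,D$), the type-$A$-style coupling term $\delta_{il}\delta_{jk}$ would sit at $(a,b)=(N,1)$ and is killed by the upper-triangularity of $m^+$; the surviving off-diagonal contribution is $R^{1,N}_{2,N-1}$, which is nonzero precisely because $(1,N)$ and $(2,N-1)$ \emph{are} antidiagonal pairs ($1'=N$, $2'=N-1$), so that $K^{1,N}_{N-1,2}=\epsilon C^1_NC^{N-1}_2\neq0$ while $\delta_{il}\delta_{jk}=0$ there. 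Without this term there is no auxiliary element $e^{N-1}=e^NE_n-q^{-1}E_ne^N$ and hence no Serre relation at all. So the ``pollution'' you propose to argue away is the mechanism the proof runs on, and the genericity claim cannot be repaired as stated.
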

\begin{proof}
The proof of Theorem \ref{propA} means that the relations of negative part can be obtained in a similar way,
so we only focus on the relations of the positive part.

$(1)$
$(m^{+})^{2n+1}_{2n+1}c^{-1}=K_{1}^{-1}\cdots K_{n-1}^{-1}K_{n}^{-1}$ follows from Lemma \ref{lemmabcd}.
From the identification in the above theorem, it is easily deduced from Corollary \ref{cor1} that
$[E_{n+1},F_{n+1}]=\frac{K_{n+1}-K_{n+1}^{-1}}{q-q^{-1}}$,
$\Delta(E_{n+1})=E_{n+1}\otimes K_{n+1}+1\otimes E_{n+1}$,
and
$\Delta(F_{n+1})=F_{n+1}\otimes 1+K_{n+1}^{-1}\otimes F_{n+1}$.

On the other hand, we have
$E_{n+1}K_{n+1}=e^{2n+1}(m^{+})^{2n+1}_{2n+1}c^{-1}
=\lambda R^{2n+1}_{a}{}^{2n+1}_{b}(m^{+})^{a}_{2n+1}e^{b}c^{-1}
$ $=R^{2n+1}_{2n+1}{}^{2n+1}_{2n+1}(m^{+})^{2n+1}_{2n+1}c^{-1}e^{2n+1}
=q^{2}K_{n+1}E_{n+1}.
$
The relations between the additional simple root vector $e^{2n+1}$ and other $K_{i},1\leq i\leq n$ can be deduced from
$e^{2n+1}(m^{+})^{p}_{p}=\lambda R^{p,2n+1}_{ab}(m^{+})^{a}_{p}e^{b}$ $=\lambda R^{p,2n+1}_{p,2n+1}(m^{+})^{p}_{p}e^{2n+1}, \ 1\leq p\leq n+1$.
Combining with
$
(m^{+})^{i+1}_{i+1}K_{n+1-i}=(m^{+})^{i}_{i}, \
1\leq i\leq n,
$
and
$
R^{1}_{1}{}^{2n+1}_{2n+1}=1, \
R^{p}_{p}{}^{2n+1}_{2n+1}=q, \
2\leq p\leq n+1,
$
we obtain
$
e^{2n+1}K_{n}=q^{-1}K_{n}e^{2n+1}, \
e^{2n+1}K_{i}=K_{i}e^{2n+1}$, for
$1\leq i\leq n-1,
$
namely,
$$
E_{n+1}K_{n}=q^{-1}K_{n}E_{n+1}, \quad
E_{n+1}K_{i}=K_{i}E_{n+1}, \quad
1\leq i\leq n-1.
$$
We observe that $F_{i}$ belongs to $(m^{-})^{i+1}_{i}$,
so the relations between $E_{n+1}$ and $F_{i}$ can be obtained by the equality
$(m^{-})^{i}_{j}e^{k}=\lambda R^{k}_{a}~{}^{i}_{b}e^{a}(m^{-})^{b}_{j}$
in Corollary \ref{cor1}.
$$
\left.
\begin{array}{l}
(m^{-})^{i+1}_{i}=(q-q^{-1})(m^{-})^{i+1}_{i+1}F_{i}, \ 1\leq i\leq n-1,\\
(m^{-})^{i+1}_{i}e^{2n+1}=\lambda R^{2n+1}_{a}{}^{i+1}_{b}e^{a}(m^{-})^{b}_{i}=e^{2n+1}(m^{-})^{i+1}_{i},1\leq i\leq n,\\
(m^{-})^{p}_{p}e^{2n+1}=\lambda R^{2n+1}_{a}{}^{p}_{b}e^{a}(m^{-})^{b}_{p}=e^{2n+1}(m^{-})^{p}_{p}, \ 2\leq p\leq n,
\end{array}
\right\}
\Longrightarrow
[E_{n+1},F_{i}]=0, \ 1\leq i\leq n.
$$

We will explore the $q$-Serre relations of the positive part.
We also observe that
$E_{n+1-i}$ belongs to $(m^{+})^{i}_{i+1}, \ 1\leq i\leq n,$
so
$$
\left\{
\begin{array}{l}
e^{2n+1}(m^{+})^{1}_{2}=\lambda R^{1}_{a}{}{}^{2n+1}_{b}(m^{+})^{a}_{2}e^{b}
=\lambda R^{1}_{1}{}{}^{2n+1}_{2n+1}(m^{+})^{1}_{2}e^{2n+1}+\lambda R^{1}_{2}{}{}^{2n+1}_{2n}(m^{+})^{2}_{2}e^{2n},\\
e^{2n+1}(m^{+})^{i}_{i+1}=\lambda R^{i}_{a}{}{}^{2n+1}_{b}(m^{+})^{a}_{i+1}e^{b}
=\lambda R^{i}_{i}{}{}^{2n+1}_{2n+1}(m^{+})^{i}_{i+1}e^{2n+1}
=\lambda q(m^{+})^{i}_{i+1}e^{2n+1}, \ 2\leq i\leq n.
\end{array}
\right.
$$
Putting the expression of $(m^{+})^{i}_{i+1}, \ 1\leq i\leq n$ into the above equalities,
we get
$$
\left\{
\begin{array}{l}
e^{2n+1}E_{j}=E_{j}e^{2n+1}, \ 1\leq j\leq n-1,\\
e^{2n}=e^{2n+1}E_{n}-q^{-1}E_{n}e^{2n+1}.
\end{array}
\right.
$$

So, we need to know the relations between $e^{2n}$ and $e^{2n+1},E_{n}$.
We have
$e^{2n+1}e^{2n}=R^{\prime}{}^{2n}_{a}{}^{2n+1}_{b}e^{a}e^{b}=-(q+\epsilon q^{\epsilon-N+2})e^{2n}e^{2n+1}+(\epsilon q^{\epsilon-N+1}+2)e^{2n+1}e^{2n}$,
so $e^{2n+1}e^{2n}=qe^{2n}e^{2n+1}$.
Combining with
$
e^{2n}=e^{2n+1}E_{n}-q^{-1}E_{n}e^{2n+1},
$
we get
$
(e^{2n+1})^{2}E_{n}-(q+q^{-1})e^{2n+1}E_{n}e^{2n+1}+E_{n}(e^{2n+1})^{2}=0,
$
namely,
$$
(E_{n+1})^{2}E_{n}-(q+q^{-1})E_{n+1}E_{n}E_{n+1}+E_{n}(E_{n+1})^{2}=0.
$$
On the other hand,
according to the equality
$e^{2n}(m^{+})^{1}_{2}=\lambda R^{1}_{a}{}^{2n}_{b}(m^{+})^{a}_{2}e^{b}=\lambda R^{1}_{1}{}^{2n}_{2n}(m^{+})^{1}_{2}e^{2n}=(m^{+})^{1}_{2}e^{2n}$
and
$
e^{2n}(m^{+})^{2}_{2}=\lambda R^{2}_{2}{}^{2n}_{2n}(m^{+})^{2}_{2}e^{2n}=q^{-1}(m^{+})^{2}_{2}e^{2n},
$
we get
$e^{2n}E_{n}=qE_{n}e^{2n}$.
Combining with
$e^{2n}=e^{2n+1}E_{n}-q^{-1}E_{n}e^{2n+1}$
again,
we obtain
$$
(E_{n})^{2}E_{n+1}-(q+q^{-1})E_{n}E_{n+1}E_{n}+E_{n+1}(E_{n})^{2}=0.
$$
With these relations,
we prove that the resulting quantum group is $U_{q}({\mathfrak {so}}_{2n+3})$.
$(2)$ and $(3)$ can be proved in a similar way.
\end{proof}

\section{Type-crossing constructions of types $BCD$ starting from type $A$}

Up to now, we have got the general inductive constructions of the classical quantum groups within the same type as in the above section.
However, Majid claimed that his double-bosonization allows to create not only a line of nodes diagram but also a tree of nodes diagram of quantum groups.
At each node of the tree,
we have more choices to adjoin suitable braided groups to obtain possible different new quantum groups of higher rank one.
In this section, we will give some examples to demonstrate this fact.
As we known, at the source node corresponding to $U_{q}({\mathfrak {sl}}_{2})$, Majid \cite{majid3} chose a pair of braided groups generated by the vector representation of $U_{q}({\mathfrak {sl}}_{2})$ to give $U_{q}({\mathfrak {sl}}_{3})$ as above.
In what follows, we will give $3$ kinds of type-crossing constructions: from type $A_1$ to type $B_2$, from type $A_2$ to type $C_3$, and from type $A_3$ to type $D_4$.

Corresponding to an irreducible representation of $U_{q}(\mathfrak g)$,
the matrix $R_{VV}$ is induced by
$R_{VV}=B_{VV}\circ(T_{V}\otimes T_{V})(\mathfrak{R})$.
Here
$
\mathfrak{R}=\sum\limits_{r_{1},\cdots,r_{n}=0}^{\infty}\prod\limits_{j=1}^{n}
\frac{(1-q_{\beta_{j}}^{-2})^{r_{j}}}{[r_{j}]_{q_{\beta_{j}}}!}q_{\beta_{j}}^{\frac{r_{j}(r_{j}+1)}{2}}E_{\beta_{j}}^{r_{j}}\otimes F_{\beta_{j}}^{r_{j}}
$
is the main part of the universal $R$-matrix of $U_h(\mathfrak g)$.
$B_{VV}$ denotes the linear operator on $V\otimes V$ given by
$B_{VV}(v\otimes w):=q^{(\mu,\mu^{\prime})}v\otimes w$
for
$v\in V_{\mu},$
$w\in V_{\mu^{\prime}}$.

\subsection{Type-crossing construction from type $A_1$ to type $B_2$} Now,
we still start from the node diagram of $U_{q}({\mathfrak {sl}}_{2})$ and
choose other braided groups to give $U_{q}({\mathfrak {so}}_{5})$ in the following.
\begin{example}
Starting from a $3$-dimensional representation $T_{V}$ of $U_{q}({\mathfrak {sl}}_{2})$,
which is given by
$
E_{1}(x_{i})=[2]_{q}x_{i+1},E_{1}(x_{3})=0,
F_{1}(x_{i+1})=x_{i},F_{1}(x_{1})=0,i=1,2,
$
where $x_{1},x_{2},x_{3}$ is the base of $V$ with corresponding weights $-\alpha_{1},0,\alpha_{1}$.
Then we get a $9\times9$ $R_{VV}$-matrix datum
$$
R_{VV}
=
\left(
\begin{array}{ccccccccc}
q^{2}&0&0&0&0&0&0&0&0\\
0&1&0&q^{2}{-}q^{-2}&0&0&0&0&0\\
0&0&q^{-2}&0&q^{2}{-}q^{-2}&0&(1{-}q^{-2})(q^{2}{-}q^{-2})&0&0\\
0&0&0&1&0&0&0&0&0\\
0&0&0&0&1&0&1{-}q^{-4}&0&0\\
0&0&0&0&0&1&0&q^{2}{-}q^{-2}&0\\
0&0&0&0&0&0&q^{-2}&0&0\\
0&0&0&0&0&0&0&1&0\\
0&0&0&0&0&0&0&0&q^{2}
\end{array}
\right).
$$

Clearly, $R_{VV}$ is invertible,
and according to the submodules decomposition of module $V^{\otimes 2}$, it is easy to see that $PR_{VV}$ obeys the minimal polynomial
$$
(PR_{VV}+q^{-2}I)(PR_{VV}-q^{2}I)(PR_{VV}-q^{-4}I)=0.
$$
Setting
$R=q^{2}R_{VV},
R^{\prime}=RPR-q^{-2}R-q^{4}R+(q^{2}+1)P$,
then
$$(PR+I)(PR^{\prime}-I)=0.$$
With these $R$ and $R'$, choose a pair of braided groups $V^{\vee}(R',R_{21}^{-1}),V(R',R)$,
and $\lambda=q^{-2}$.
Identify $e^{3},f_{3},(m^{+})^{3}_{3}c^{-1}$ with the additional simple root vectors $E_{2}, F_{2}$ and the group-like element $K_{2}$.
Then the resulting quantum group $U(V^{\vee}(R',R_{21}^{-1})$, $\widetilde{U_{q}^{ext}({\mathfrak {sl}}_{2})},V(R',R))$
is the quantum group $U_{q}({\mathfrak {so}}_{5})$.
\end{example}
\begin{proof}
Corresponding to this representation,
we get the $m^{\pm}$-matrix as follows
$$
m^{+}=
\left(
\begin{array}{ccc}
K_{1}&-(q{-}q^{-1})E_{1}&\frac{q(1{-}q^{-2})^{2}}{[2]_{q}}E_{1}^{2}K_{1}^{-1}\\
0&1&-(q{-}q^{-1})E_{1}K_{1}^{-1}\\
0&0&K_{1}^{-1}
\end{array}
\right),
$$
$$
m^{-}=
\left(
\begin{array}{ccc}
K_{1}^{-1}&0&0\\
(q^{4}{-}1)F_{1}&1&0\\
(q^{4}{-}q^{2})(q^{4}{-}1)K_{1}F_{1}^{2}&(q^{2}{-}q^{-2})K_{1}F_{1}&K_{1}
\end{array}
\right).
$$

With the identification as above,
$
[E_{2},F_{2}]=\frac{K_{2}-K_{2}^{-1}}{q^{2}-q^{-2}},
$
$\Delta(E_{2})=E_{2}\otimes K_{2}+1\otimes E_{2}$,
and
$\Delta(F_{2})=F_{2}\otimes 1+K_{2}^{-1}\otimes F_{2}$
can be deduced from Corollary \ref{cor1}.

$
E_{2}K_{2}=e^{3}(m^{+})^{3}_{3}c^{-1}
=\lambda R^{3}_{3}{}^{3}_{3}(m^{+})^{3}_{3}e^{3}c^{-1}
=R^{3}_{3}{}^{3}_{3}(m^{+})^{3}_{3}c^{-1}e^{3}
=q^{4}K_{2}E_{2},
$
$
E_{2}K_{1}=q^{-2}K_{1}E_{2}$ can be deduced from
$
e^{3}(m^{+})^{1}_{1}=\lambda R^{1}_{1}{}^{3}_{3}(m^{+})^{1}_{1}e^{3}=q^{-2}(m^{+})^{1}_{1}e^{3}.
$
On the other hand,
$E_{1}K_{2}=E_{1}(m^{+})^{3}_{3}c^{-1}=E_{1}K_{1}^{-1}c^{-1}=q^{-2}K_{1}^{-1}c^{-1}E_{1}=q^{-2}K_{2}E_{1}$.
According to the equality $(m^{-})^{2}_{1}e^{3}=\lambda R^{3}_{3}{}^{2}_{2}e^{3}(m^{-})^{2}_{1}=e^{3}(m^{-})^{2}_{1}$,
we obtain $e^{3}F_{1}=F_{1}e^{3}$ combining with $(m^{-})^{2}_{1}=(q^{4}-1)F_{1}$,
namely,
$
[E_{2},F_{1}]=0.
$

We will explore the $q$-Serre relation between $e^{3}$ and $E_{1}$.
The equality
$(q+q^{-1})e^{2}=q^{-2}E_{1}e^{3}-e^{3}E_{1}$ can be given by
$
e^{3}(m^{+})^{1}_{2}=\lambda R^{13}_{ab}(m^{+})^{a}_{2}e^{b}=q^{-2}(m^{+})^{1}_{2}e^{3}+(q^{2}-q^{-2})(m^{+})^{2}_{2}e^{2}.
$
Combining with
$e^{3}e^{2}=q^{2}e^{2}e^{3}$,
which is deduced from $e^{2}e^{3}=R'{}^{32}_{ab}e^{a}e^{b}=(q^{4}+q^{2}+1)e^{2}e^{3}-(q^{2}+1)e^{3}e^{2}$,
we obtain
$$
(E_{2})^{2}E_{1}-(q^{2}+q^{-2})E_{2}E_{1}E_{2}+E_{1}(E_{2})^{2}=0.
$$
On the other hand,
we need to know the relation between $e^{2}$ and $E_{1}$.
$$
\left.
\begin{array}{l}
e^{2}(m^{+})^{1}_{2}=\lambda R^{12}_{ab}(m^{+})^{a}_{2}e^{b}=(m^{+})^{1}_{2}e^{2}+(q^{2}{-}q^{-2})(m^{+})^{2}_{2}e^{1},\\
e^{1}(m^{+})^{1}_{2}=\lambda R^{11}_{ab}(m^{+})^{a}_{2}e^{b}=\lambda R^{11}_{11}(m^{+})^{1}_{2}e^{1}=q^{2}(m^{+})^{1}_{2}e^{1}.
\end{array}
\right\}
\Longrightarrow
\left\{
\begin{array}{l}
e^{1}=\frac{1}{q+q^{-1}}(E_{1}e^{2}-e^{2}E_{1}),\\
e^{1}E_{1}=q^{2}E_{1}e^{1}.
\end{array}
\right.
$$
Combining with
$(q+q^{-1})e^{2}=q^{-2}E_{1}e^{3}-e^{3}E_{1}$,
we obtain
$$
(E_{1})^{3}E_{2}-
\left[
\begin{array}{c}
3\\
1
\end{array}
\right]
_{q}(E_{1})^{2}E_{2}E_{1}+
\left[
\begin{array}{c}
3\\
2
\end{array}
\right]
_{q}E_{1}E_{2}(E_{1})^{2}
-E_{2}(E_{1})^{3}=0.
$$
With these relations, the Cartan matrix of the resulting quantum group is
$
\left(
\begin{array}{cc}
2&-2\\
-1&2
\end{array}
\right)
$.

So the resulting quantum group is $U_{q}({\mathfrak {so}}_{5})$,
and the proof is complete.
\end{proof}

\subsection{Type-crossing construction from type $A_2$ to type $C_3$}
In Example 3.1, we get $U_{q}({\mathfrak {sl}}_{4})$ starting from $U_{q}({\mathfrak {sl}}_{3})$ by choosing the braided groups generated by a $3$-dimensional vector representation via the Majid's double-bosonization construction.
In the example below, we will choose another pair of braided groups generated by a $6$-dimensional irreducible module to give $U_{q}({\mathfrak {sp}}_{6})$ starting from the node diagram of $U_{q}({\mathfrak {sl}}_{3})$.
\begin{example}
The pair of braided groups we want to have is obtained from the $6$-dimensional irreducible representation $T_{V}$ of $U_{q}({\mathfrak {sl}}_{3})$,
which is defined by

$
E_{1}
\left(
\begin{array}{c}
x_{1}\\
x_{2}\\
x_{3}
\end{array}
\right)
=
\left(
\begin{array}{c}
x_{2}\\
(q+q^{-1})x_{4}\\
x_{5}
\end{array}
\right)
$
,\qquad
$
E_{2}
\left(
\begin{array}{c}
x_{2}\\
x_{4}\\
x_{5}
\end{array}
\right)
=
\left(
\begin{array}{c}
x_{3}\\
x_{5}\\
(q+q^{-1})x_{6}
\end{array}
\right),
$

$
F_{1}
\left(
\begin{array}{c}
x_{2}\\
x_{4}\\
x_{5}
\end{array}
\right)
=
\left(
\begin{array}{c}
(q+q^{-1})x_{1}\\
x_{2}\\
x_{3}
\end{array}
\right)
$
,\qquad
$
F_{2}
\left(
\begin{array}{c}
x_{3}\\
x_{5}\\
x_{6}
\end{array}
\right)
=
\left(
\begin{array}{c}
x_{2}\\
(q+q^{-1})x_{4}\\
x_{5}
\end{array}
\right),
$

\noindent
where $\{\,x_{i}\mid 1\leq i\leq 6\,\}$ is a basis of $V$ with the corresponding weights
$-2\lambda_{1},
-2\lambda_{1}+\alpha_{1},
-2\lambda_{1}+\alpha_{1}+\alpha_{2},
-2\lambda_{1}+2\alpha_{1},
-2\lambda_{1}+2\alpha_{1}+\alpha_{2},
-2\lambda_{1}+2\alpha_{1}+2\alpha_{2}$,
respectively.
We can obtain a $36\times 36$ matrix $R_{VV}$ corresponding to the $6$-dimensional representation,
and the $PR_{VV}$ obeys the minimal polynomial
$$
(PR_{VV}-q^{\frac{8}{3}}I)(PR_{VV}+q^{-\frac{4}{3}}I)(PR_{VV}-q^{-\frac{10}{3}}I)=0.
$$
Setting $R=q^{\frac{4}{3}}R_{VV},R^{\prime}=RPR-(q^{-2}+q^{4})R+(q^{2}+1)P$,
we have
$$(PR+I)(PR^{\prime}-I)=0,$$
and $\lambda=q^{-\frac{4}{3}}$.
Identify $e^{6},f_{6},(m^{+})^{6}_{6}c^{-1}$ with the additional simple root vectors $E_{3}, F_{3}$ and the group-like element $K_{3}$,
then the resulting quantum group $U(V^{\vee}(R^{\prime},R_{21}^{-1}),\widetilde{U_{q}^{ext}({\mathfrak {sl}}_{3})},V(R^{\prime},R))$
is the quantum group $U_{q}({\mathfrak {sp}}_{6})$ with $K_{i}^{\frac{1}{3}} \ (i=1, 2)$ adjoined.
\end{example}
\begin{proof}
The elements in matrix $m^{\pm}$ we need can be obtained by Lemma \ref{lem1},
which are given by the following equalities
$$
\left\{
\begin{array}{l}
(m^{+})^{1}_{2}=-(q-q^{-1})E_{1}K_{1}^{\frac{1}{3}}K_{2}^{\frac{2}{3}}, \quad
(m^{+})^{2}_{2}=K_{1}^{\frac{1}{3}}K_{2}^{\frac{2}{3}},\\
(m^{+})^{5}_{6}=-(q^{2}-q^{-2})E_{2}K_{1}^{-\frac{2}{3}}K_{2}^{-\frac{4}{3}}, \quad
(m^{+})^{6}_{6}=K_{1}^{-\frac{2}{3}}K_{2}^{-\frac{4}{3}}, \quad
(m^{+})^{4}_{4}=K_{1}^{-\frac{2}{3}}K_{2}^{\frac{2}{3}},\\
(m^{-})^{5}_{3}=q(q-q^{-1})K_{1}^{\frac{2}{3}}K_{2}^{\frac{1}{3}}F_{1}, \quad
(m^{-})^{5}_{5}=K_{1}^{\frac{2}{3}}K_{2}^{\frac{1}{3}},\\
(m^{-})^{6}_{5}=(q-q^{-1})K_{1}^{\frac{2}{3}}K_{2}^{\frac{4}{3}}F_{2}, \quad
(m^{-})^{6}_{6}=K_{1}^{\frac{2}{3}}K_{2}^{\frac{4}{3}}.
\end{array}
\right.
$$

We only focus on the relations of the positive part.

Note that
$
E_{3}K_{3}=e^{6}(m^{+})^{6}_{6}c^{-1}
=\lambda R^{66}_{66}(m^{+})^{6}_{6}e^{6}c^{-1}$
$=R^{66}_{66}(m^{+})^{6}_{6}c^{-1}e^{6}
=q^{4}(m^{+})^{6}_{6}c^{-1}e^{6}=q^{4}K_{3}E_{3}.
$
Combining with $e^{6}(m^{+})^{i}_{i}=\lambda R^{i}_{i}{}^{6}_{6}(m^{+})^{i}_{i}e^{6}$,
we have
$$
\left.
\begin{array}{l}
(m^{+})^{6}_{6}K_{2}^{2}=(m^{+})^{4}_{4},\\
e^{6}(m^{+})^{6}_{6}=q^{\frac{8}{3}}(m^{+})^{6}_{6}e^{6},\\
e^{6}(m^{+})^{4}_{4}=q^{-\frac{4}{3}}(m^{+})^{4}_{4}e^{6}.
\end{array}
\right\}
\Longrightarrow
\left\{
\begin{array}{l}
e^{6}K_{1}=K_{1}e^{6},\\
e^{6}K_{2}=q^{-2}K_{2}e^{6}.
\end{array}
\right.
\Longrightarrow
\left\{
\begin{array}{l}
E_{3}K_{1}=K_{1}E_{3},\\
E_{3}K_{2}=q^{-2}K_{2}E_{3}.
\end{array}
\right.
$$

On the other hand,
$
E_{2}K_{3}
=E_{2}K_{1}^{-\frac{2}{3}}K_{2}^{-\frac{4}{3}}c^{-1}
=q^{\frac{2}{3}}q^{-\frac{8}{3}}K_{1}^{-\frac{2}{3}}K_{2}^{-\frac{4}{3}}c^{-1}E_{2}
=q^{-2}K_{3}E_{2},
$
$
E_{1}K_{3}
=E_{1}K_{1}^{-\frac{2}{3}}K_{2}^{-\frac{4}{3}}c^{-1}
=q^{-\frac{4}{3}}q^{\frac{4}{3}}K_{1}^{-\frac{2}{3}}K_{2}^{-\frac{4}{3}}c^{-1}E_{1}
=K_{3}E_{1}.
$
We will explore the $q$-Serre relations. Since
$E_{1},E_{2}$ belong to $(m^{+})^{1}_{2}, \ (m^{+})^{5}_{6}$, respectively,
then
$$
\left.
\begin{array}{l}
(m^{+})^{1}_{2}=-(q-q^{-1})E_{1}(m^{+})^{2}_{2},\\
e^{6}(m^{+})^{1}_{2}=q^{-\frac{4}{3}}(m^{+})^{1}_{2}e^{6},\\
e^{6}(m^{+})^{2}_{2}=q^{-\frac{4}{3}}(m^{+})^{2}_{2}e^{6}.
\end{array}
\right\}
\Longrightarrow
e^{6}E_{1}=E_{1}e^{6},
\Longrightarrow
E_{3}E_{1}=E_{1}E_{3}.
$$
$$
\left.
\begin{array}{l}
(m^{+})^{5}_{6}=-(q^{2}-q^{-2})E_{2}(m^{+})^{6}_{6},\\
e^{6}(m^{+})^{5}_{6}=q^{\frac{2}{3}}(m^{+})^{5}_{6}e^{6}+(q^{2}-q^{-2})q^{\frac{2}{3}}(m^{+})^{6}_{6}e^{5},\\
e^{6}(m^{+})^{6}_{6}=q^{\frac{8}{3}}(m^{+})^{6}_{6}e^{6}.
\end{array}
\right\}
\Longrightarrow
e^{5}=q^{-\frac{4}{3}}(q^{-2}E_{2}e^{6}-e^{6}E_{2}).
$$

So we need to know the relation between $e^{5}$ and $e^{6}$.
$e^{6}e^{5}=q^{2}e^{5}e^{6}$ is obtained by
$e^{i}e^{j}=R'^{ji}_{ab}e^{a}e^{b}$ and $R^{\prime}{}^{5}_{5}{}^{6}_{6}=-q^{2}-1,R^{\prime}{}^{5}_{6}{}^{6}_{5}=2+q^{-2}$,
then we have
$$
(E_{3})^{2}E_{2}-(q^{2}+q^{-2})E_{3}E_{2}E_{3}+E_{2}(E_{3})^{2}.
$$
On the other hand,
we need to obtain the relation between $e^{5}$ and $E_{2}$.
$$
\left.
\begin{array}{l}
(m^{+})^{5}_{6}=-(q^{2}-q^{-2})E_{2}(m^{+})^{6}_{6},\\
e^{5}(m^{+})^{5}_{6}=q^{\frac{2}{3}}(m^{+})^{5}_{6}e^{5}+(q-q^{-1})q^{-\frac{4}{3}}(m^{+})^{6}_{6}e^{4},\\
e^{4}(m^{+})^{6}_{6}=q^{-\frac{4}{3}}(m^{+})^{6}_{6}e^{4}.
\end{array}
\right\}
\Longrightarrow
e^{4}=-(q+q^{-1})(E_{2}e^{5}-e^{5}E_{2}).
$$
Then the relation between $e^{5}$ and $E_{2}$ must be deduced from the relation between $e^{4}$ and $E_{2}$,
which is given by the following cross relations
$$
\left.
\begin{array}{l}
(m^{+})^{5}_{6}=-(q^{2}{-}q^{-2})E_{2}(m^{+})^{6}_{6},\\
e^{4}(m^{+})^{5}_{6}=q^{\frac{2}{3}}(m^{+})^{5}_{6}e^{4},\\
e^{4}(m^{+})^{6}_{6}=q^{-\frac{4}{3}}(m^{+})^{6}_{6}e^{4}.
\end{array}
\right\}
\Longrightarrow
e^{4}E_{2}=q^{2}E_{2}e^{4}
\Longrightarrow
(E_{2}e^{5}-e^{5}E_{2})E_{2}=q^{2}E_{2}(E_{2}e^{5}-e^{5}E_{2}).
$$

Combining with $e^{5}=q^{-\frac{4}{3}}(q^{-2}E_{2}e^{6}-e^{6}E_{2})$,
we obtain
$
(E_{2})^{3}e^{6}-(q^{2}+1+q^{-2})(E_{2})^{2}e^{6}E_{2}+(q^{2}+1+q^{-2})E_{2}e^{6}(E_{2})^{2}-e^{6}(E_{2})^{3}=0,
$
namely,
$$
(E_{2})^{3}E_{3}-
\left[
\begin{array}{c}
3\\
1
\end{array}
\right]
_{q}
(E_{2})^{2}E_{3}E_{2}
+
\left[
\begin{array}{c}
3\\
2
\end{array}
\right]
_{q}
E_{2}E_{3}(E_{2})^{2}-E_{3}(E_{2})^{3}=0.
$$
From these relations,
it follows that the resulting quantum group is $U_{q}({\mathfrak {sp}}_{6})$.
\end{proof}

\subsection{Type-crossing construction from type $A_3$ to type $D_4$}
In the following example, we can choose the different braided groups generated by a $6$-dimensional $U_{q}({\mathfrak {sl}}_{4})$-module to give $U_{q}({\mathfrak {so}}_{8})$ based on the node diagram of $U_{q}({\mathfrak {sl}}_{4})$.
\begin{example}
There is a $6$-dimensional irreducible representation $T_{V}$ of $U_{q}({\mathfrak {sl}}_{4})$,
given by
\begin{gather*}
E_{1}
\left(
\begin{array}{c}
x_{2}\\
x_{3}
\end{array}
\right)
=
\left(
\begin{array}{c}
x_{4}\\
x_{5}
\end{array}
\right)
, \quad
E_{2}
\left(
\begin{array}{c}
x_{1}\\
x_{5}
\end{array}
\right)
=
\left(
\begin{array}{c}
x_{2}\\
x_{6}
\end{array}
\right)
,\quad
E_{3}
\left(
\begin{array}{c}
x_{2}\\
x_{4}
\end{array}
\right)
=
\left(
\begin{array}{c}
x_{3}\\
x_{5}
\end{array}
\right);\\
F_{1}
\left(
\begin{array}{c}
x_{4}\\
x_{5}
\end{array}
\right)
=
\left(
\begin{array}{c}
x_{2}\\
x_{3}
\end{array}
\right)
,\quad
F_{2}
\left(
\begin{array}{c}
x_{2}\\
x_{6}
\end{array}
\right)
=
\left(
\begin{array}{c}
x_{1}\\
x_{5}
\end{array}
\right),
\quad
F_{3}
\left(
\begin{array}{c}
x_{3}\\
x_{5}
\end{array}
\right)
=
\left(
\begin{array}{c}
x_{2}\\
x_{4}
\end{array}
\right).
\end{gather*}
Here $\{x_{i}\mid 1\leq i\leq 6\}$ is a basis of $V$ with corresponding weights
$-2\lambda_{1}+\alpha_{1},
-2\lambda_{1}+\alpha_{1}+\alpha_{2},
-2\lambda_{1}+\alpha_{1}+\alpha_{2}+\alpha_{3},
-2\lambda_{1}+2\alpha_{1}+\alpha_{2},
-2\lambda_{1}+2\alpha_{1}+\alpha_{2}+\alpha_{3},
-2\lambda_{1}+2\alpha_{1}+2\alpha_{2}+\alpha_{3}$,
respectively.
Then we get another $36\times 36$ matrix $R_{VV}$,
and the $PR_{VV}$ matrix obeys the minimal polynomial
$$(PR_{VV}+q^{-1}I)(PR_{VV}-q^{-1}I)(PR_{VV}-qI)=0.$$

Setting $R=qR_{VV}$, $R'=RPR-(q^{2}+1)R+(q^{2}+1)P$,
then we have
$$(PR+I)(PR^{\prime}-I)=0,$$
and $\lambda=q^{-1}$.
Identify $e^{6},f_{6},(m^{+})^{6}_{6}c^{-1}$ with the additional simple root vectors $E_{4},\, F_{4}$ and the group-like element $K_{4}$.
Then the resulting quantum group $U(V^{\vee}(R^{\prime}, R_{21}^{-1}),\widetilde{U_{q}^{ext}({\mathfrak {sl}}_{4})}, V(R^{\prime},R))$
is the quantum group $U_{q}({\mathfrak {so}}_{8})$ with $K_{i}^{\frac{1}{2}}, \ 1\leq i\leq3$ adjoined.
\end{example}
\begin{proof}
The entries in matrix $m^{\pm}$ we need are listed by the following equalities
$$
\left\{
\begin{array}{l}
(m^{+})^{1}_{2}=-(q-q^{-1})E_{2}K_{1}^{\frac{1}{2}}K_{3}^{\frac{1}{2}}, \quad
(m^{+})^{2}_{2}=K_{1}^{\frac{1}{2}}K_{3}^{\frac{1}{2}},\\
(m^{+})^{2}_{3}=-(q-q^{-1})E_{3}K_{1}^{\frac{1}{2}}K_{3}^{-\frac{1}{2}}, \quad
(m^{+})^{3}_{3}=K_{1}^{\frac{1}{2}}K_{3}^{-\frac{1}{2}},\\
(m^{+})^{2}_{4}=-(q-q^{-1})E_{1}K_{1}^{-\frac{1}{2}}K_{3}^{\frac{1}{2}}, \quad
(m^{+})^{4}_{4}=K_{1}^{-\frac{1}{2}}K_{3}^{\frac{1}{2}}, \quad
(m^{+})^{6}_{6}=K_{1}^{-\frac{1}{2}}K_{2}^{-1}K_{3}^{-\frac{1}{2}},\\
(m^{-})^{2}_{1}=q(q-q^{-1})K_{1}^{-\frac{1}{2}}K_{3}^{-\frac{1}{2}}F_{2}, \quad
(m^{-})^{2}_{2}=K_{1}^{-\frac{1}{2}}K_{3}^{-\frac{1}{2}},\\
(m^{-})^{3}_{2}=q(q-q^{-1})K_{1}^{-\frac{1}{2}}K_{3}^{\frac{1}{2}}F_{3}, \quad
(m^{-})^{3}_{3}=K_{1}^{-\frac{1}{2}}K_{3}^{\frac{1}{2}},\\
(m^{-})^{5}_{3}=q(q-q^{-1})K_{1}^{\frac{1}{2}}K_{3}^{\frac{1}{2}}F_{1}, \quad
(m^{-})^{5}_{5}=K_{1}^{\frac{1}{2}}K_{3}^{\frac{1}{2}}.
\end{array}
\right.
$$

The cross relations can be easily obtained as above,
so we only describe the $q$-Serre relations of the positive part.
$E_{1}, E_{3}$ belong to $(m^{+})^{2}_{4}, \ (m^{+})^{2}_{3}$, respectively,
so we have
$$
\left.
\begin{array}{l}
(m^{+})^{2}_{4}=-(q-q^{-1})E_{1}(m^{+})^{4}_{4},\\
e^{6}(m^{+})^{2}_{4}=\lambda R^{2}_{2}{}^{6}_{6}(m^{+})^{2}_{4}e^{6}=(m^{+})^{2}_{4}e^{6},\\
e^{6}(m^{+})^{4}_{4}=(m^{+})^{4}_{4}e^{6}.
\end{array}
\right\}
\Longrightarrow
e^{6}E_{1}=E_{1}e^{6},
\Longrightarrow
E_{4}E_{1}=E_{1}E_{4}.
$$
$$
\left.
\begin{array}{l}
(m^{+})^{2}_{3}=-(q-q^{-1})E_{3}(m^{+})^{3}_{3},\\
e^{6}(m^{+})^{2}_{3}=\lambda R^{2}_{2}{}^{6}_{6}(m^{+})^{2}_{3}e^{6}=(m^{+})^{2}_{3}e^{6},\\
e^{6}(m^{+})^{3}_{3}=(m^{+})^{3}_{3}e^{6}.
\end{array}
\right\}
\Longrightarrow
e^{6}E_{3}=E_{3}e^{6},
\Longrightarrow
E_{4}E_{3}=E_{3}E_{4}.
$$

$E_{2}$ belongs to $(m^{+})^{1}_{2},$
then we obtain
$$
\left.
\begin{array}{l}
(m^{+})^{1}_{2}=-(q-q^{-1})E_{2}(m^{+})^{2}_{2},\\
e^{6}(m^{+})^{1}_{2}
=q^{-1}(m^{+})^{1}_{2}e^{6}+q^{-1}(q-q^{-1})(m^{+})^{2}_{2}e^{5},\\
e^{6}(m^{+})^{2}_{2}=(m^{+})^{2}_{2}e^{6}, \
e^{5}(m^{+})^{2}_{2}=q^{-1}(m^{+})^{2}_{2}e^{5}.
\end{array}
\right\}
\Longrightarrow
e^{5}=q^{-1}E_{2}e^{6}-e^{6}E_{2}.
$$

Combining with
$
e^{6}e^{5}=qe^{5}e^{6},
$
which is obtained by
$e^{i}e^{j}=R'{}^{ji}_{ab}e^{a}e^{b}$ and
$R'{}^{5}_{5}{}^{6}_{6}=-2q,
R'{}^{5}_{6}{}^{6}_{5}=3$,
we have
$$
E_{2}(E_{4})^{2}-(q+q^{-1})E_{4}E_{2}E_{4}+(E_{4})^{2}E_{2}=0.
$$
On the other hand,
combining with $e^{5}E_{2}=qE_{2}e^{5}$ deduced from $e^{5}(m^{+})^{1}_{2}=\lambda R^{1}_{1}{}^{5}_{5}(m^{+})^{1}_{2}e^{5}$,
we obtain
$
E_{4}(E_{2})^{2}-(q+q^{-1})E_{2}E_{4}E_{2}+(E_{2})^{2}E_{4}=0.
$
\end{proof}
\begin{remark}
Observing the constructions of $C_{3}$ and $D_{4}$,
we claim that $C_{n+1}$ and $D_{n+1}$ can be constructed directly from the node diagram $A_n$.
The pairs of braided groups or the $R$-matrices data $R, R^{\prime}$ we should choose will be obtained by
the `symmetric square' and the second exterior power of the vector representation of $A_{n}$.
The verification of the claim will rely on some skills,
full details of it will be developed in a sequel.
\end{remark}
\bigskip


\begin{thebibliography}{100}
\bibitem[Br]{Bri} T. Bridgeland, Quantum groups via Hall algebras of complexes, \textsl{Ann. of Math. (2) 177} (2)(2013), 739--759.
\bibitem[D]{dri} V.~G. Drinfeld, Hopf algebras and the quantum Yang-Baxter equation, \textsl{Dokl. Akad. Nauk. SSSR 283} (5)(1985), 1060--1064.
\bibitem[FRT]{FRT1} L.~D. Faddeev, N.~Yu. Reshetikhin and L.~A. Takhtajan, Quantization of Lie groups and
Lie algebras, \textsl{Leningrad Math. J. 1} (1990), 193--225.
\bibitem[FR]{fang}  X. Fang and M. Rosso, Multi-brace
cotensor Hopf algebras and quantum groups, arXiv:1210.3096v1.
\bibitem[G]{G} J.~E. Grabowski, Braided enveloping algebras associated to quantum parabolic subalgebras, \textsl{Comm. Algebra 39} (10) (2011), 3491--3514.
\bibitem[HLR]{hlr}  N.~H. Hu,  Y.~N. Li and M. Rosso, Multi-parameter quantum groups via quantum quasi-symmetric algebras, arXiv:1307.1381.
\bibitem[JR]{jr} R.~Q. Jian and M. Rosso, Braided cofree Hopf algebras and quantum multi-brace algebras, \textsl{J. reine angew. Math. 667} (2012), 193--220.
\bibitem[Ji]{jimbo}  M. Jimbo, A $q$-difference analog of $U(g)$ and the Yang-Baxter equation, \textsl{Lett. Math. Phys. 10} (1) (1985), 63--69.
\bibitem[KS]{klim} A. Klimyk and K. Schm\"udgen, Quantum Groups and Their Representations, \textsl{Springer-Verlag, Berlin Heidelberg} (1997).
\bibitem[L1]{lus1} G. Lusztig, Canonical bases arising from quantized enveloping algrbras, \textsl{J. Amer. Math. Soc. 3} (1990), 447--498.
\bibitem[L2]{lus2} G. Lusztig, Introduction to Quantum Groups, \textsl{Progress in Math. 110}, Birkhauser, Boston (1993).
\bibitem[M1]{majid1} S. Majid, Braided matrix structure of the Sklyanin algebra
and of the quantum Lorentz group, \textsl{Comm. Math. Phys. 156} (1993), 607--638.
\bibitem[M2]{majid2} S. Majid,  Cross products by braided groups and bosonization, \textsl{J. Algebra 163} (1994), 165--190.
\bibitem[M3]{majid3} S. Majid, Double-bosonization of braided groups and the construction of $U_q(\mathfrak g)$,
\textsl{Math. Proc. Cambridge. Philos. Soc. 125} (1999), 151--192.
\bibitem[M4]{majid4} S. Majid, Braided groups, \textsl{J. Pure and Applied Algebra 86} (1993), 187--221.
\bibitem[M5]{majid5} S. Majid, Algebras and Hopf algebras in braided categories, \textsl{Lecture Notes in Pure
and Appl. Math 158} (1994), 55--105.
\bibitem[M6]{majid6} S. Majid, Braided momentum in the $q$-Poincar\'e group, \textsl{J. Math. Phys. 34} (1993), 2045--2058.
\bibitem[M7]{majid7} S. Majid, More examples of bicrossproduct and double cross product Hopf algebras,
\textsl{Isr. J. Math 72} (1990), 133--148.
\bibitem[M8]{majid8} S. Majid, New quantum groups by double-bosonization, \textsl{Czech. J. Phys. 47} (1) (1997), 79--90.
\bibitem[M9]{majid9} S. Majid, Some comments on bosonization and biproducts, \textsl{Czech. J. Phys. 47} (2) (1997), 151--171.
\bibitem[Ra]{rad} D. Radford, Hopf algebras with projection, \textsl{J. Algebra 92} (1985), 322--347.
\bibitem[RT]{rt} D. Radford and J. Towber, Yetter-Drinfeld categories associated to an arbitrary bialgebra, \textsl{J. Pure Appl. Algebra  87} (3) (1993), 259--279.
\bibitem[Ri]{ringel} C. Ringel, Hall algebras and quantum groups, \textsl{Invent. Math. 101} (1990), 583--592.
\bibitem[Ro]{rosso} M. Rosso, Quantum groups and quantum shuffles, \textsl{Invent. Math. 133} (2) (1998), 399--416.
\bibitem[So]{somm} Y. Sommerh\"auser, Deformed enveloping algebras, \textsl{New York J. Math.  2}  (1996), 35--58.
\bibitem[Y]{yetter} D.~N. Yetter, Quantum groups and representations of monoidal categories, \textsl{Math. Proc. Camb.
Phil. Soc. 108}, (1990), 261--290.
\end{thebibliography}
\end{document}